\begin{document}

\newcommand{\chp}{\mathds{1}}

\newtheorem{thm}{Theorem}[section]
\newtheorem{theorem}{Theorem}[section]
\newtheorem{lem}[thm]{Lemma}
\newtheorem{lemma}[thm]{Lemma}
\newtheorem{prop}[thm]{Proposition}
\newtheorem{proposition}[thm]{Proposition}
\newtheorem{cor}[thm]{Corollary}
\newtheorem{conj}[thm]{Conjecture}

\theoremstyle{definition}
\newtheorem{defn}[thm]{Definition}
\newtheorem*{remark}{Remark}
\newtheorem{objective}[thm]{Objective}

%\numberwithin{equation}{section}
%\renewcommand\thesection{\arabic{section}}

\newcommand{\Z}{{\mathbb Z}} %cph changed from \mathbf
\newcommand{\Q}{{\mathbb Q}}
\newcommand{\R}{{\mathbb R}}
\newcommand{\C}{{\mathbb C}}
\newcommand{\N}{{\mathbb N}}
\newcommand{\FF}{{\mathbb F}}
\newcommand{\fq}{\mathbb{F}_q}
\newcommand{\rmk}[1]{\footnote{{\bf Comment:} #1}}

\newcommand{\Gal}{\mathrm{Gal}}
\newcommand{\Fr}{\mathrm{Fr}}
\newcommand{\Hom}{\mathrm{Hom}}
\newcommand{\GL}{\mathrm{GL}}

\renewcommand{\mod}{\;\operatorname{mod}}
\newcommand{\ord}{\operatorname{ord}}
\newcommand{\TT}{\mathbb{T}}
\renewcommand{\i}{{\mathrm{i}}}
\renewcommand{\d}{{\mathrm{d}}}
\renewcommand{\^}{\widehat}
\newcommand{\HH}{\mathbb H}
\newcommand{\Vol}{\operatorname{vol}}
\newcommand{\area}{\operatorname{area}}
\newcommand{\tr}{\operatorname{tr}}
\newcommand{\norm}{\mathcal N} % norm =(\frac{ n+\sqrt{n^2-4}} 2)^2
\newcommand{\intinf}{\int_{-\infty}^\infty}
\newcommand{\ave}[1]{\left\langle#1\right\rangle} %  average
\newcommand{\Var}{\operatorname{Var}}
\newcommand{\Prob}{\operatorname{Prob}}
\newcommand{\sym}{\operatorname{Sym}}
\newcommand{\disc}{\operatorname{disc}}
\newcommand{\CA}{{\mathcal C}_A}
\newcommand{\cond}{\operatorname{cond}} % conductor
\newcommand{\lcm}{\operatorname{lcm}}
\newcommand{\Kl}{\operatorname{Kl}} %Kloosterman sum
\newcommand{\leg}[2]{\left( \frac{#1}{#2} \right)}  % Legendre symbol
\newcommand{\Li}{\operatorname{Li}}

\newcommand{\sumstar}{\sideset \and^{*} \to \sum}
\newcommand{\prodstar}{{\sideset \and^{*} \to \prod}}

\newcommand{\LL}{\mathcal L} %L-function of u
\newcommand{\sumf}{\sum^\flat}
\newcommand{\Hgev}{\mathcal H_{2g+2,q}}
\newcommand{\USp}{\operatorname{USp}}
\newcommand{\conv}{*}
\newcommand{\dist} {\operatorname{dist}}
\newcommand{\CF}{c_0} % Fejer constant
\newcommand{\kerp}{\mathcal K}

\newcommand{\Cov}{\operatorname{cov}}
\newcommand{\Sym}{\operatorname{Sym}}

\newcommand{\ES}{\mathcal S} % sums over AP
\newcommand{\EN}{\mathcal N} % sum over short intervals
\newcommand{\EM}{\mathcal M} % sum over pols of deg n
\newcommand{\Sc}{\operatorname{Sc}} %Secular coefficients
\newcommand{\Ht}{\operatorname{Ht}}

\newcommand{\E}{\operatorname{E}} % expectation
\newcommand{\sign}{\operatorname{sign}} %sign

\newcommand{\divid}{d} % the divisor function

\newcommand{\pol}{\mathcal P_d}
\newcommand{\Bad}{\operatorname{Bad}}

\newcommand{\pols}{\mathcal P_\gamma(T)}

\title[On Cilleruelo's conjecture]{On Cilleruelo's conjecture for the least common multiple of polynomial sequences}
\author{Ze\'ev Rudnick and Sa'ar Zehavi}

 \address{Raymond and Beverly Sackler School of Mathematical Sciences,
Tel Aviv University, Tel Aviv 69978, Israel}
 
 \begin{abstract}
 A conjecture due to Cilleruelo states that  for an irreducible polynomial $f$ with integer coefficients of degree $d\geq 2$, the least common multiple $L_f(N)$ of the sequence $f(1), f(2), \dots, f(N)$  has asymptotic growth $\log L_f(N)\sim (d-1)N\log N$ as $N\to \infty$. We establish a version of this conjecture for almost all shifts of a fixed polynomial, the range of $N$ depending on the range of shifts. \end{abstract}
\date{\today}
\maketitle

 \section{Introduction}
 
 \subsection{Background}
 It is a well known and elementary fact that the least common multiple of all integers $1,2,\dots,N$ is exactly given by 
$$\log \lcm\{1,2,\dots,N\} = \psi(N):=\sum_{n\leq N} \Lambda(n)$$
with $\Lambda(n)$ being the von Mangoldt function, and hence by the Prime Number Theorem, 
$$
\log \lcm\{1,2,\dots,N\} \sim N .
$$

For a polynomial $f\in \Z[X]$, set 
$$L_f(N):=\lcm\{f(n):n=1,\dots,n\}.
$$
The goal is to understand the asymptotic growth of $\log L_f(N)$ as $N\to \infty$. 

In the linear case $\deg f=1$, we still have $\log L_f(N)\sim c_f N $ from the Prime Number Theorem in arithmetic progressions, see 
 e.g. \cite{BKS}. A similar growth occurs for products of linear polynomials, see \cite{HQT}, and for any polynomial with {\em non-negative} integer coefficients, there is a lower bound $\log L_f(N)\gg N$ \cite{HLQW}. 
  However, in the case of irreducible polynomials higher degree, 
Cilleruelo \cite{Cilleruelo} 
%(see also \cite[Problem 4]{CRS}) 
conjectured that the growth is faster than linear, precisely: 
\begin{conj}\label{conj cil} 
If $f$ is an irreducible polynomial with $\deg f\geq 2$, then
$$\log L_f(N)\sim (\deg f-1)N\log N,\quad N\to \infty .
$$
\end{conj}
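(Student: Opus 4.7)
The plan is to relate $\log L_f(N)$ to the easily computed quantity $\sum_{n\le N}\log|f(n)| = dN\log N + O(N)$, where $d=\deg f$, and to pin down the discrepancy between them. Writing $N_k(p):=\#\{n\le N : p^k\mid f(n)\}$, one has the clean identity
$$\sum_{n\le N}\log|f(n)| - \log L_f(N) \;=\; \sum_p\log p\sum_{k\ge 1}\bigl(N_k(p)-1\bigr)_+,$$
so Conjecture~\ref{conj cil} is equivalent to showing that this ``multiplicity excess'' is asymptotic to $N\log N$.

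The next step is to split the right-hand side at a cut-off $Y$ chosen close to $N$, say $Y = N/(\log N)^A$ for a large constant $A$. For $p^k \le Y$ with $p$ coprime to $\disc(f)$, Hensel's lemma yields $N_k(p) = \rho(p)\lfloor N/p^k\rfloor + O(\rho(p))$, where $\rho(p)$ denotes the number of roots of $f$ modulo $p$. Combined with Landau's prime ideal theorem for the splitting field of $f$ --- which gives $\sum_{p^k \le Y}\rho(p)\log p/p^k \sim \log Y$ --- partial summation produces
$$\sum_{p^k \le Y}\log p\,(N_k(p)-1)_+ \;=\; N\log Y - \psi(Y) + O(N) \;=\; N\log N + o(N\log N),$$
which is exactly Cilleruelo's conjectured main term.

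The remaining task is to show that the large prime-power contribution
$$E(N) \;:=\; \sum_{p^k > Y}\log p\,(N_k(p)-1)_+$$
is $o(N\log N)$. Using $(N_k(p)-1)_+ \le \binom{N_k(p)}{2}$ reduces this to controlling
$$\sum_{1\le n_1<n_2\le N}\sum_{p^k > Y,\ p^k\mid \gcd(f(n_1),f(n_2))}\log p.$$
Factoring $f(X)-f(Z) = (X-Z)g(X,Z)$ with $g$ of total degree $d-1$, the inner condition forces $p \mid (n_1-n_2)\,g(n_1,n_2)$, so --- after separating the case $p\mid n_1-n_2$, handled by small-prime estimates --- one is reduced to bounding the contribution of primes exceeding $Y$ to the values $g(n+h,n)$, averaged over the shift $h = n_2-n_1$.

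The essential obstacle lies precisely here: for $d\ge 3$, $g(X+h,X)$ is a polynomial in $X$ of degree $d-1 \ge 2$, and no unconditional estimate is known for the average over $h \le N$ of the contribution of primes above $Y$ dividing these values --- this is a Bombieri--Vinogradov-type problem for polynomial moduli of degree $\ge 2$. For $d=2$, where $g$ is linear in each variable, the relevant bounds reduce to Weil's character-sum estimates and Cilleruelo succeeded by this route. For general $d \ge 3$, overcoming the obstacle appears to require either a new large-sieve input for polynomial sequences, a GRH-conditional argument via Chebotarev equidistribution of Frobenius in the splitting field of $f$, or --- as pursued in the present paper --- an averaged formulation over shifts of $f$ that sidesteps the pointwise difficulty.
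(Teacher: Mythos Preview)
Your proposal is not a proof, and you say so yourself: you correctly isolate the ``multiplicity excess'' for large primes, reduce it to a divisor problem for the auxiliary polynomial $g(n_1,n_2)=(f(n_1)-f(n_2))/(n_1-n_2)$, and then acknowledge that for $d\ge 3$ no unconditional bound of the required strength is known. That honest admission is the right conclusion, because Conjecture~\ref{conj cil} is \emph{open} for $d\ge 3$; the paper does not prove it either. What the paper proves is Theorem~\ref{thm f0(x)-a}, an almost-all statement over shifts $f_0(x)-a$, precisely to sidestep the obstacle you describe.

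Your decomposition is essentially the same as the paper's. Your identity $\log P_f(N)-\log L_f(N)=\sum_p\log p\sum_{k\ge 1}(N_k(p)-1)_+$ is equivalent to the paper's $\sum_p(\alpha_p-\beta_p)\log p$; your small-prime-power piece $p^k\le Y$ corresponds to the paper's $NC_N(a)+\Bad_N(a)$ (with $C_N(a)\sim\log N$ coming from the prime ideal theorem, exactly your Landau input); and your large-prime error $E(N)$ is the paper's $\Delta_N(a)$. The paper's Remark after \eqref{ave Delta} makes the same diagnosis you do: for fixed $a$, $\Bad_N$ and $C_N$ are tractable, while $\Delta_N(a)=o(N\log N)$ is the unresolved point. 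The paper then averages $\Delta_N(a)$ over $|a|\le T$, using that the condition $p\mid f_0(m)-a$ fixes $a\bmod p$ so that only $\ll T/p+1$ shifts contribute --- this is the extra input the averaging buys, and it is exactly the ``averaged formulation over shifts'' you mention in your final sentence.

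In short: there is no proof to compare because the statement is a conjecture; your outline of the approach and of its bottleneck agrees with the paper's, and your closing remarks accurately anticipate the paper's strategy.
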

 Cilleruelo proved Conjecture~\ref{conj cil} for quadratic polynomials. Moreover,  in that case there is a secondary main  term 
\begin{equation*}\label{refined cil}
\log L_f(N)=N\log N + b_fN +o(N)
\end{equation*}
see also \cite{RSZ}. 
No other case of Conjecture~\ref{conj cil} is known to date.   
We do know that for any irreducible $f$ of degree $d\geq 3$, we have an upper bound
 %\begin{equation}\label{better upper bd for L} 
$\log L_f(N)\lesssim (d -1) N\log N$    
%\end{equation}
and one can prove $\log L_f(N)\gg N\log N$,  
%by using that $f(n)$ has a prime factor $p>n\sqrt{\log n}$ for a positive proportion of  $n$'s \cite{Nagell 1921}, 
as came up in a discussion with James Maynard \cite{MaynardRudnick}.

We will show that Conjecture~\ref{conj cil} holds  {\em for almost all}  $f$ in a suitable sense.

\subsection{General setup} 

We fix a polynomial  $f_0(x)\in \Z[x]$ of degree $d\geq 3$, which we assume is monic (or more generally, primitive - no prime divides all coefficients) and for $a\in \Z$ we set
\[
f_a(x) = f_0(x)-a
\]
 It is known   that  they are generically irreducible.  Set 
\[
L_a(N) = \lcm\{ f_0(n)-a:n=1,\dots, N\} 
\]
We want to show that 
\begin{thm}\label{thm f0(x)-a}
For almost all $|a|\leq T$,  and all $N$ satisfying  
\[
 T^{\frac{1}{d-1}}< N< \frac{T}{\log T}
\]
%$N\log N<T\ll N^{d-1}$, 
we have 
\begin{equation}\label{result on La(N)}
\log L_a(N)\sim (d-1)N\log N.
\end{equation}
\end{thm}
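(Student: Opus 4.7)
The strategy is a second moment method. We start from the prime-power decomposition
\[
\log L_a(N) = \sum_{n\leq N}\log|f_0(n)-a| \;-\; \sum_{p,\,k\geq 1}(\log p)\bigl(S_{p^k}(a)-1\bigr)_+,
\]
where $S_{p^k}(a):=\#\{n\leq N: p^k\mid f_0(n)-a\}$ and $(x)_+:=\max(x,0)$, obtained by comparing $v_p$ of $\prod_n(f_0(n)-a)$ with $v_p(L_a(N))=\max_n v_p(f_0(n)-a)$. Under the hypothesis $T<N^{d-1}$ we have $|a|\ll N^d$, so the first sum equals $dN\log N+O(N)$ for every $|a|\leq T$ (a routine calculation excluding the $\leq N$ exceptional shifts with $f_0(n_0)=a$). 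The task therefore reduces to showing that the correction sum $C(a):=\sum_{p,k}(\log p)(S_{p^k}(a)-1)_+$ equals $N\log N(1+o(1))$ for almost all $a$.

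For the first moment, a direct count gives $\E_a S_{p^k}(a)=N/p^k+O(N/T)$ and $\E_a\chp[S_{p^k}(a)\geq 1]=|R_{p^k}(N)|/p^k+O(|R_{p^k}(N)|/T)$, valid also for $p^k>2T$ via a double count using the uniqueness of the lift, where $R_{p^k}(N):=\{f_0(n)\bmod p^k:1\leq n\leq N\}$ satisfies $|R_{p^k}(N)|\leq\min(N,p^k)$ and, by a collision estimate for pairs $p^k\mid f_0(n_1)-f_0(n_2)$, $|R_{p^k}(N)|=N(1+o(1))$ for $p^k\gg N$. Summing $(\log p)(\E_a S_{p^k}-\E_a\chp[S_{p^k}\geq 1])$ and applying Mertens' formula $\sum_{p^k\leq X}\log p/p^k=\log X+O(1)$ split at the thresholds $p^k\sim N$ and $p^k\sim N^d$ (the upper cutoff being $N^d$ because $T<N^{d-1}$) yields $\E_a C(a)=N\log N(1+o(1))$. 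For the variance, the Chinese Remainder Theorem gives $\Cov_a((S_{p^k}-1)_+,(S_{q^l}-1)_+)=O(1/T)$ for coprime moduli with $p^kq^l\leq T$, so the off-diagonal contribution is negligible; the diagonal is controlled by $\Var_a(S_{p^k}-1)_+\leq\E_a S_{p^k}^2\ll N/p^k+N^2/p^{2k}$, producing $\Var_a C(a)\ll N(\log N)^2=o(((d-1)N\log N)^2)$.

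Chebyshev's inequality now discards, at each fixed $N$, a set of $a$'s of relative density $O(1/N)$. Monotonicity of $N\mapsto L_a(N)$ together with the slow-variation bound $\log L_a(N+1)-\log L_a(N)\leq\log|f_0(N+1)-a|=O(\log T)$ allow the uniform-in-$N$ statement to be deduced by a union bound over a near-geometric sequence of checkpoints $N_j=(1+\eta_j)^j$ with $\eta_j\to 0$ spanning the window $T^{1/(d-1)}<N<T/\log T$; this is convergent because $N\geq T^{1/(d-1)}$ is already fairly large. The principal obstacle is the equidistribution and CRT-independence analysis in the intermediate prime range $p^k\in(T,N^d]$, where no individual prime equidistributes $f_0(n)\bmod p^k$; we circumvent this by summing over primes via the elementary identity $\sum_{p^k\mid M}\log p=\log|M|$, which is precisely what lets the double-counting argument bypass pointwise equidistribution. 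The upper hypothesis $N<T/\log T$ is dictated by this step: it is exactly the threshold at which both the total collision error in $R_{p^k}(N)$ and the accumulated $O(1/T)$ covariance errors sum to $o(N\log N)$, while the lower hypothesis $N>T^{1/(d-1)}$ places the main prime cutoff at $N^d$ rather than at $T$, delivering the $(d-1)$ coefficient.
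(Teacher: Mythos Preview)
Your decomposition $\log L_a(N)=\log P_a(N)-\sum_{p,k}(\log p)(S_{p^k}(a)-1)_+$ is correct, and the first-moment heuristic is essentially the paper's argument in disguise (your ``collision estimate'' is the paper's $G(m,n)$ trick controlling $\Delta_N$). The serious problem is the variance bound.

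Your claim that CRT gives $\Cov_a\bigl((S_{p^k}-1)_+,(S_{q^l}-1)_+\bigr)=O(1/T)$ is false as stated. For small prime powers $p^k\leq N$ one has $(S_{p^k}-1)_+\approx (N/p^k)\rho(a;p^k)$, which has size $\asymp N/p^k$, not $O(1)$; the trivial CRT error for a product of two periodic functions of periods $p^k,q^l$ over an interval of length $2T$ is of order $\bar g_p\,\bar g_q\cdot p^kq^l/T\asymp N^2/T$, not $1/T$. Summing $(\log p)(\log q)$ times this over all $p^k,q^l\leq N$ already produces a contribution of order $N^4/T$ to $\Var_a C(a)$, and at the lower edge $T\approx N\log N$ of your range this is $N^3/\log N$, which is \emph{not} $o\bigl((N\log N)^2\bigr)$. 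Moreover you say nothing about the off-diagonal pairs with $p^kq^l>T$, which is most of the range once $p^k$ or $q^l$ exceeds $\sqrt{T}$.

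The paper avoids this by \emph{separating} the small-prime contribution $N\,C_N(a)=N\sum_{p\leq N}\frac{\log p}{p-1}\rho(a;p)$ from the large-prime piece $\Delta_N(a)$. For $C_N$ the relevant random variable $\sigma(a;p)=\rho(a;p)-1$ is bounded, and the covariance $\ave{\sigma(\bullet;p)\sigma(\bullet;q)}$ is controlled via the Fourier expansion of $\rho$ and \emph{Weil's bound} $|\sum_{x}e(tf_0(x)/p)|\ll\sqrt{p}$, yielding $\ll\sqrt{pq}\log(pq)/T$; this square-root saving is exactly what makes the off-diagonal sum $O(1)$ when $T\geq N\log N$. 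For $\Delta_N$ (and $\Bad_N$) the paper uses only the \emph{first} moment together with nonnegativity and Markov's inequality, so no covariance bound for large primes is needed at all. Your unified second-moment scheme would need the Weil input for small primes and would still have to treat large-prime covariances, which you have not done; without these ingredients the argument does not close.
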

\begin{remark}
What one would like to show is that \eqref{result on La(N)} holds for \underline{all} $N>N_0(a)$, for all but $o(T)$ values of $|a|\leq T$. At this time we do not know how to do this. 
\end{remark}
\subsection{Plan}
Let 
\[
P_a(N) = \prod_{n\leq N}|f_0(n)-a| 
\] 
We write down the prime power factorization
\[
P_a(N)= \prod_p p^{\alpha_p(a;N)}
\]
 If $T\ll N^{d-1}$ then $\alpha_p(a;N)=0$ for $p\gg N^d$, and (Lemma~\ref{lem:asymp of P}) 
\[
\log P_a(N) = d N\log N +O(N).
\]

We also write the prime power factorization of $L_a(N)$ as
\[
L_a(N)= \prod_p p^{\beta_p(N)}
\]
Let 
\[
D(a)  = \disc(f_0(x)-a)
\]
be the discriminant of $f_0(x)-a$. It is a polynomial in $a$ of degree $d-1$, with integer coefficients. 

We show (Proposition~\ref{prop: decomp})
\begin{equation}\label{basic formula for L}
\log L_a(N) = dN\log N - \Bad_N(a) - \Delta_N(a) -NC_N(a) +O(N)
\end{equation}
where
\[
\Bad_N(a) =\sum_{\substack{p\leq N\\ p\mid D(a)}} \alpha_p(N)\log p
\]
\[
\Delta_N(a) = \sum_{N<p\ll N^d} (\alpha_p(N)-\beta_p(N)) \log p
\]
%\marginpar{$\Delta_N\ll \log N\cdot S_N$}
\[
C_N(a) =\sum_{\substack{  p\leq N\\p\nmid D(a)}}\frac{\log p}{p-1} \rho(a;p)
\]
where $\rho(a;d) =   \#\{n\bmod d: f_0(n)-a=0\bmod d\}$.

We will show that for almost all $|a|\leq T$, with $N\log N<T<N^{d-1}$, we have 
%\begin{enumerate}
\begin{equation} \label{result CN} 
C_N(a)\sim \log N  .
\end{equation}
\begin{equation} \label{result Bad} 
\Bad_N(a)\ll N (\log\log N)^{1+o(1)}.
\end{equation}
\begin{equation}\label{result Delta}
\Delta_N(a)\ll N (\log\log N)^{1+o(1)}.
\end{equation} 
%\end{enumerate}
Inserting these into \eqref{basic formula for L} will prove Theorem~\ref{thm f0(x)-a}. 

%\marginpar{add detail about Markov's inequality}

To prove \eqref{result CN},  \eqref{result Bad}, \eqref{result Delta} we use averaging: Denoting by $\ave{\bullet}$ 
the average over all $|a|\leq T$ such that $f_0(x)-a$ is irreducible, we show that for $N\log N<T<N^{d-1}$,
\begin{equation} \label{variance CN} 
\ave{|C_N(a)-\log N|^2}\ll (\log \log N)^2 
\end{equation}
\begin{equation} \label{ave Bad} 
\ave{\Bad_N(a)}\ll N \log\log N
\end{equation}
\begin{equation}\label{ave Delta}
\ave{\Delta_N(a)}\ll N \log\log N.
\end{equation} 
Noting that $\Delta_N(a),\Bad_N(a)\geq0$ are non-negative, we obtain \eqref{result CN},  \eqref{result Bad}, \eqref{result Delta} 
from the Chebysehv/Markov inequality.
%\end{enumerate}

\begin{remark}
In the deterministic case ($a$ fixed, $N\to \infty$), the quantities $\Bad_N$ and $C_N$ can be handled easily, as in the quadratic case $d=2$, see \cite{Cilleruelo}. It is the quantity $\Delta_N(a)$ which we do not know how to  show is $o(N\log N)$ individually (though the upper bound $O(N\log N)$ is easy). This is why we need to average over $a$. However, letting $a$ grow with $N$ introduces new problems, in particular for the study of $C_N$, which may need GRH to overcome individually. The results \eqref{variance CN}  and  \eqref{ave Bad}  for random $a$ are much easier and this is the method that we use.  
\end{remark}

\noindent{\bf Acknowledgements.} 
 We thank Shaofang Hong and Guoyou Qian for introducing the problem during a visit to Chengdu in 2017, and Lior Bary Soroker and James Maynard 
 for discussions. 
The research   was supported by the European Research Council under the European Union's Seventh
Framework Programme (FP7/2007-2013) / ERC grant agreements n$^{\text{o}}$ 320755 and 786758.

 \section{Background}
 \subsection{Generic irreducibility} 
Fix $f_0(x)\in \Z[x]$ monic, of degree $d\geq 2$. 
It is known that $f_0(x)-a$ is generically irreducible,  in fact (see \cite[\S9.7]{Serre})  
  %\marginpar{find a reference or prove. Conjecture: $O(T^{1/d})$ are reducible.}
  \begin{lem}\label{lem:Serre} 
Fix $f_0(x)\in \Z[x]$ of degree $d\geq 2$. Then the number of $|a|\leq T$ for which $f_0(x)-a$ is reducible is $O(\sqrt{T})$.
  \end{lem}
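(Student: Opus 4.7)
The plan is to first verify that the bivariate polynomial $F(a,x) := f_0(x) - a$ is irreducible in $\Q(a)[x]$, and then invoke the quantitative Hilbert irreducibility theorem from \cite[\S9.7]{Serre}; I will also sketch the direct counting argument that underlies the bound.

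For irreducibility over $\Q(a)$: viewed as an element of $\Q[a][x]$, $F$ is linear in $a$ with leading coefficient (in $a$) equal to $-1 \in \Q[x]$. In any factorization $F = g(a,x)\,h(a,x)$ in $\Q[a][x]$, one factor---say $h$---must be independent of $a$; then $h \in \Q[x]$ must divide the coefficient of $a$ in $F$, namely $-1$, forcing $h$ to be a unit. Irreducibility in $\Q(a)[x]$ follows.

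For the quantitative bound, since $f_0$ is monic, Gauss's lemma reduces the question to counting $a \in \Z$ with $|a| \leq T$ for which $f_0(x) - a = g(x)h(x)$ in $\Z[x]$, with $g,h$ monic of degrees $k$ and $d-k$; by symmetry one may take $1 \leq k \leq d/2$. The roots $\alpha_i$ of $f_0(x) - a$ satisfy $|\alpha_i| = O(T^{1/d})$ uniformly for $|a| \leq T$, so the $j$-th coefficient of $g = \prod_{i \in S}(x - \alpha_i)$ is $O(T^{j/d})$. In the case $k = 1$ we have $a = f_0(n)$ for some $n \in \Z$ with $|n| \ll T^{1/d}$, contributing only $O(T^{1/d}) \leq O(\sqrt T)$ bad $a$'s (using $d \geq 2$).

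The main obstacle is the middle range $2 \leq k \leq d/2$, which arises only when $d \geq 4$: naive coefficient counting here gives $O(T^{k(k+1)/(2d)})$, which exceeds $\sqrt T$ for $k \geq 2$. The crucial improvement is that the divisibility $g \mid f_0(x) - a$ imposes $k-1$ polynomial conditions on the $k$ coefficients of $g$---namely, the vanishing of the coefficients of $x^1,\dots,x^{k-1}$ in the remainder $f_0 \bmod g$---cutting the locus of admissible $g$'s down to a one-dimensional subvariety. I would then bound integer points on this subvariety, via Siegel's theorem applied to the generically positive-genus curves that appear, together with direct analysis of the finitely many genus-zero exceptional cases coming from possible compositional decompositions of $f_0$. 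This is exactly what is packaged in Serre's quantitative Hilbert irreducibility theorem in the cited section, and yields the uniform $O(\sqrt T)$ bound.
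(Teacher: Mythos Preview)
Your approach is correct and coincides with the paper's: the paper gives no proof of this lemma at all, simply citing \cite[\S9.7]{Serre}, and your proposal does the same after verifying the needed hypothesis. Your check that $f_0(x)-a$ is irreducible in $\Q(a)[x]$ (via the observation that it is linear in $a$ with unit leading coefficient) is a useful supplement, since the paper leaves this implicit; the subsequent sketch of the Siegel-theorem mechanism behind the $O(\sqrt{T})$ bound is accurate in outline but is ultimately just expository context for the citation, not an independent proof.
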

  This is sharp in this generality, since for even degree $d=2m$, for the polynomial $f_0(x) = x^{2m}$ we have $x^{2m}-a$ is reducible whenever $a=b^2$ is a perfect square. 
  
    Denote
\[
D(a)=\disc(f_0(x)-a)
\]
the discriminant of $f_a(x)$, 
which is a polynomial in $a$ of degree $\leq d-1$ with integer coefficients (depending on the coefficients of $f_0$). 
We assume that $a$ is such that  $f_0(x)-a$ is irreducible, and therefore $D(a)$  is not  zero, i.e. $f_a$ has no multiple roots.
%, which  happens for all but  at most $d-1$ values of $a$, the roots of $D(a)$. 

Examples: 

i) $f_0(x) = x^3$, then $\disc(f_0(x)-a) = \disc(x^3-a) = -27a^2$. 

ii) $f_0(x)=x^3-3x$ then $\disc(f_0(x)-a) =-27(a-2)(a+2)$.

%\section{Preliminaries}
 
%Assume now that $f\in \Z[x]$ is any irreducible polynomial of degree $\deg f\geq 2$.  Given the nature of the problem, we may assume that the coefficients of $F$ have no common divisor, for instance if $f$ is monic. 
%After replacing $f$ by $-f$ we may assume that it's top coefficient is positive:
%$$f(x) = a_{\rm top} x^{\deg f} + \dots, \qquad a_{\rm top}>0$$

\subsection{A decomposition}

\begin{prop}\label{prop: decomp}
 For $|a|\leq N^{d-1}$ such that $f_0(x)-a$ is irreducible, we have
 \[
 %\begin{split}
 \log L_a(N) = d\log N  -\Bad_N(a)  -NC_N(a) 
%-  \sum_{\substack{p\leq N \\p\nmid D(a)}}\alpha_p(N)\log p 
-\Delta_N(a)  +O(N) 
 %\end{split}
 \]
 \end{prop}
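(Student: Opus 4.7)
The plan is to prove the identity by comparing the two prime factorizations $\log P_a(N) = \sum_p \alpha_p(a;N)\log p$ and $\log L_a(N) = \sum_p \beta_p(a;N)\log p$, and then invoking the asymptotic $\log P_a(N) = dN\log N + O(N)$ supplied by Lemma~\ref{lem:asymp of P}. Since $0\leq \beta_p\leq \alpha_p$, it suffices to establish
\[
\sum_p \big(\alpha_p(a;N)-\beta_p(a;N)\big)\log p \;=\; \Bad_N(a) + NC_N(a) + \Delta_N(a) + O(N),
\]
and then subtract from $\log P_a(N)$ to recover the stated identity.

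I will split the primes into three regimes. Primes $p\gg N^d$ contribute nothing, since $|f_a(n)|\leq CN^d$ for $n\leq N$ whenever $|a|\leq N^{d-1}$. Primes in the range $N<p\ll N^d$ give precisely $\Delta_N(a)$ by definition. The remaining primes $p\leq N$ are separated according to whether $p\mid D(a)$. For the good primes $p\nmid D(a)$, the key tool is Hensel's lemma: $f_a\bmod p$ has only simple roots, so each lifts uniquely modulo every $p^k$ and $\rho(a;p^k)=\rho(a;p)$. Writing $\alpha_p(a;N) = \sum_{k\geq 1}\#\{n\leq N: p^k\mid f_a(n)\}$ and truncating at $k_{\max}\ll \log N/\log p$ (using $p^{k_{\max}}\leq CN^d$), this yields
\[
\alpha_p(a;N) \;=\; \sum_{k=1}^{k_{\max}}\!\left(\frac{N\rho(a;p)}{p^k}+O(\rho(a;p))\right) \;=\; \frac{N\rho(a;p)}{p-1} + O\!\left(\frac{\log N}{\log p}\right),
\]
and summing over $p\leq N$ with $p\nmid D(a)$ produces $\sum\alpha_p(a;N)\log p = NC_N(a)+O(N)$.

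The remaining task is to absorb the $\beta_p$-contributions from $p\leq N$ into the error term. For any such $p$ one has $\beta_p\log p\leq \log\max_n|f_a(n)|\leq d\log N+O(1)$, using that the irreducibility of $f_0(x)-a$ forces $f_a(n)\neq 0$ for integer $n$. Summed over the $\pi(N)=O(N/\log N)$ good primes this is $O(N)$. For the bad primes, $\sum_{p\leq N,\, p\mid D(a)}\alpha_p\log p = \Bad_N(a)$ holds by definition, while the associated $\beta_p$-sum is bounded by $(d\log N)\,\omega(D(a))$. Since $|D(a)|\ll |a|^{d-1}\leq N^{(d-1)^2}$, the standard estimate $\omega(n)\ll \log n/\log\log n$ yields $\omega(D(a))\ll \log N/\log\log N$, so this contribution is $\ll (\log N)^2/\log\log N = O(N)$. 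Assembling the three regimes produces the displayed identity and hence the proposition.

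The main delicate point in this plan is the Hensel step: one needs the bijection between roots of $f_a$ modulo $p^k$ and modulo $p$ to hold uniformly in $p$, and the error $O(\rho(a;p))$ in the level-$k$ count must remain controlled all the way up to the edge $p^{k}\sim N^d$ where the main term $N\rho(a;p)/p^k$ degenerates. Everything else is bookkeeping that depends only on the hypothesis $|a|\leq N^{d-1}$, which bounds both $|f_a(n)|$ for $n\leq N$ and $|D(a)|$.
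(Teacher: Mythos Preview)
Your proposal is correct and follows essentially the same route as the paper: start from $\log L_a(N)=\log P_a(N)-\sum_p(\alpha_p-\beta_p)\log p$, invoke Lemma~\ref{lem:asymp of P} for $\log P_a(N)$, isolate $\Delta_N(a)$ from the primes $p>N$, use Hensel's lemma to obtain $\alpha_p(N)=N\rho(a;p)/(p-1)+O(\log N/\log p)$ for good primes $p\leq N$, and bound the $\beta_p$-contribution over $p\leq N$ by $O(N)$ via $\beta_p\log p\ll\log N$ and $\pi(N)\ll N/\log N$. The only cosmetic difference is that you split the $\beta_p$-sum over $p\leq N$ into good and bad primes (invoking $\omega(D(a))\ll\log N/\log\log N$ for the latter), whereas the paper bounds $\sum_{p\leq N}\beta_p(N)\log p\ll N$ in one stroke; both arguments are equivalent here.
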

 
 For  $a\in \Z$ such that  $f_a(x)=f_0(x)-a$ is irreducible, let 
$$P_a(N):=\prod_{n\leq N} |f_a(n)|
$$
 which is nonzero since $f_a$ has no rational roots, and write the prime power decomposition as 
$$
P_a(N) = \prod_p p^{\alpha_p(N)}  
$$
so that 
\[
\alpha_p(N) = \sum_{n\leq N} \nu_p(f_a(n))
\]
where $\nu_p(m):=\max(k\geq 0:p^k\mid m)$.

% Assume $f_a(x)=f_0(x)-a$ is irreducible. 
Following Cilleruelo \cite{Cilleruelo}, we want to relate $\log L_a(N)$ to $\log P_a(N)$, which is clearly bigger. 
We  write the prime power decomposition of $L_a(N)$ as 
$$L_a(N) = \prod_p p^{\beta_p(N)}$$
 with 
$$ \beta_p(N) = \max\{\nu_p(f_a(n)):n\leq N\} .
$$

Using the prime factorization of $L_a(N)$ and $P_a(N)$ we have 
\begin{multline}\label{decomp prelim}
\log L_a(N) = \log P_a(N) - \sum_{p\leq N }\alpha_p(N)\log p+\sum_{p\leq N }\beta_p(N)\log p 
\\
- \sum_{p> N } (\alpha_p(N)-\beta_p(N))\log p
\end{multline}
where we have separated out the contribution of primes $p\leq N $,   and the larger ones.  We further break off the contribution of primes $p\leq N $ which divide the discriminant $D(a)=\disc(f_a)$, by setting
\[
\Bad_N(a):=\sum_{\substack{p\leq N  \\ p\mid D(a)}}\alpha_p(N)\log p
\]
and abbreviate the contribution of big primes $p>N $ as 
\[
\Delta_N(a):=\sum_{p> N } (\alpha_p(N)-\beta_p(N))\log p
\]
Note that  $\Bad_N, \Delta_N\geq 0$ are both non-negative. We obtain an expression
\begin{equation}\label{initial division}
\begin{split}
 \log L_a(N) &= \log P_a(N) +\sum_{p\leq N }\beta_p(N)\log p 
\\
&-\Bad_N(a) -  \sum_{\substack{p\leq N \\p\nmid D(a)}}\alpha_p(N)\log p -\Delta_N(a) . 
 \end{split}
\end{equation}

\subsection{The quantity $P_a(N)$}

 \begin{lem}\label{lem:asymp of P}
 For    $f_0(x)\in \Z[x]$ monic of degree $d$, and  $|a|\ll N^{d-1}$ so that $f_0(x)-a$ is irreducible, we have
\[
\log P_a(N) =d N\log N  +O(N)
\]
\end{lem}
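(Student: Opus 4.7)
The plan is to estimate $\log P_a(N) = \sum_{n=1}^N \log|f_0(n)-a|$ by producing matching upper and lower bounds. For the upper bound, since $f_0$ is monic of degree $d$ and $|a|\ll N^{d-1}$, we have for $n\leq N$ the crude estimate $|f_a(n)|\leq n^d + O(n^{d-1}) + |a| \ll N^d$, so $\log|f_a(n)|\leq d\log N + O(1)$; summing over $n\leq N$ gives $\log P_a(N)\leq dN\log N + O(N)$.

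For the matching lower bound I would factor $f_a(x)=\prod_{i=1}^d(x-\alpha_i)$ over $\C$ and control the size of the roots. Rearranging $\alpha_i^d = a - (c_{d-1}\alpha_i^{d-1}+\cdots+c_0)$ and bounding the right side by $|a|+C_0(1+|\alpha_i|^{d-1})$ gives $|\alpha_i|\leq M$ with $M:=C(1+|a|)^{1/d}$ and $C$ depending only on $f_0$; the hypothesis $|a|\ll N^{d-1}$ therefore yields $M\ll N^{(d-1)/d}$. For integers $n>2M$, each root satisfies $|n-\alpha_i|\geq n/2$, so $|f_a(n)|\geq (n/2)^d$ and $\log|f_a(n)|\geq d\log n - O(1)$. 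For $n\leq 2M$, irreducibility of $f_a$ (of degree $d\geq 2$) rules out rational (hence integer) roots, so $f_a(n)\in\Z\setminus\{0\}$ and thus $\log|f_a(n)|\geq 0$. Combining these two regimes and applying Stirling,
\[
\log P_a(N) \;\geq\; \sum_{2M<n\leq N}\bigl(d\log n + O(1)\bigr) \;=\; dN\log N - O(M\log M) - O(N).
\]
Since $M\log M\ll N^{(d-1)/d}\log N = O(N)$ for $N$ large (using $\log N = O(N^{1/d})$), we obtain $\log P_a(N)\geq dN\log N - O(N)$, matching the upper bound.

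The argument is essentially routine; the only delicate point is the small-$n$ range, where some $n$ may lie near a real root of $f_a$ and make $|f_a(n)|$ exceptionally small, threatening the lower bound. What saves us is purely arithmetic: an irreducible integer polynomial of degree $\geq 2$ has no rational roots, so $f_a(n)$ is a nonzero integer and hence $|f_a(n)|\geq 1$ at every integer $n$. These potentially tiny terms therefore contribute non-negatively to the log-sum and cannot spoil the lower bound, which is the only nontrivial obstacle to the proof.
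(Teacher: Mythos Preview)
Your proof is correct. Both you and the paper split the sum into a ``small $n$'' range (handled via irreducibility: $f_a(n)\in\Z\setminus\{0\}$ so $|f_a(n)|\geq 1$) and a ``large $n$'' range where $\log|f_a(n)|=d\log n+O(1)$, but the implementations differ slightly. The paper splits at $N/\log N$ and, for $n>N/\log N$, writes $f_a(n)=n^d\bigl(1+O((\log N)^d/N)\bigr)$ directly, obtaining a two-sided estimate in one stroke. You instead bound the roots $|\alpha_i|\leq M\ll N^{(d-1)/d}$ and split at $2M$, then treat upper and lower bounds separately. Your route makes the role of the roots explicit and gives a smaller exceptional range ($N^{(d-1)/d}$ versus $N/\log N$), while the paper's route avoids factoring over $\C$ altogether; either way the argument is routine once one observes that irreducibility forces $|f_a(n)|\geq 1$ at integers.
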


\begin{proof}
Write
\[
\log P_a(N) = \sum_{n\leq N}\log |f_0(n)-a|. 
\]
Since we assume $f_0(x)-a$ is irreducible, non of the factors $f_0(n)-a$ can vanish so that $\log P_a(N)$ is well defined. 
If $f_0(x) = x^d+c_{d-1}x^{d-1}+\dots$, we have for $n\leq N$
\[
  f_0(n)-a= n^d\Big(1+\frac {c_{d-1}}n+\frac{c_{d-2}}{n^2} + \dots -\frac{a}{n^d}\Big)
\]

Consider first $n$'s satisfying $N/\log N<n\leq N$, for which we use  (recall $|a|\leq N^{d-1}$) 
\[
\log \Big(1+\frac {c_{d-1}}n+\frac{c_{d-2}}{n^2} + \dots -\frac{a}{n^d}\Big)=  O(\frac{(\log N)^d}{N})
\]
so that 
\[
\sum_{\frac{N}{\log N}<n\leq N}\log |f_0(n)-a| = \sum_{\frac{N}{\log N}<n\leq N}\Big(d\log n + O(\frac{(\log N)^d}{N}) \Big) = dN\log N +O(N)
\]

For $1\leq n\leq N/\log N$,  we just use $1\leq |f_0(n)-a|\ll N^{d}$ so that 
$0\leq \log|f_0(n)-a| \ll \log N$, and 
%$\log \Big(1+\frac {c_{d-1}}n+\frac{c_{d-2}}{n^2} + \dots -\frac{a}{n^d}\Big)=O(1)$, and 
\[
\sum_{n\leq \frac{N}{\log N}} \log |f_0(n)-a|  \ll \sum_{n\leq  N/\log N} \log N \ll N 
\]

Hence 
\[
\log P_a(N) = dN\log N +O(N)
\]
as claimed. 
\end{proof}

\subsubsection{Dealing with $\beta_p(N)$}
For $a$ such that $f_0(x)-a$ is irreducible, 
we have
$$\beta_p(N)\ll  \frac{\log N}{\log p}$$
because 
\[
\beta_p(N) = \max_{n\leq N}\max(k\geq 0:p^k\mid f_0(n)-a)
\]
and since $f_0(n)-a\neq 0$ for all $n$, if $p^k\mid f_0(n)-a\neq 0$ then 
\[
k\leq \frac{ \log|f_0(n)-a|}{\log p}\ll \frac{ \log n+\log|a|}{\log p }
\]
Hence since $|a|\lll N^{d-1}$, 
\[
\beta_p(N) \ll  \frac{\log N}{\log p}
\]
and hence the contribution of primes $p\leq N $ to \eqref{initial division}  is 
\begin{equation}\label{cont of beta}
\sum_{p\leq N }\beta_p(N)\log p\ll \sum_{p\leq N } \log N   \ll N .
 \end{equation}
 
 \subsubsection{Dealing with $\alpha_p(N)$}
 Using Hensel's lemma, it is easy to see that   (\cite{Nagell 1921} see also \cite[Lemma 4] {Cilleruelo}):
 \begin{lem} 
 For $p\nmid D(a)=\disc(f_0(x)-a)$, we have
 \[
 \alpha_p(N) = N\frac{\rho(a;p)}{p-1} +O\Big( \frac{\log N}{\log p} \Big)
 \]
 where $\rho(a;p) = \#\{n\bmod p: f_0(n)-a=0\bmod p\}$. 
  \end{lem}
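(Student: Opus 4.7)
The plan is to write $\alpha_p(N)$ as a sum over prime-power levels, apply Hensel's lemma to count lifts exactly, and then sum a geometric series.

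First, I would interchange the order of summation
\[
\alpha_p(N) = \sum_{n\leq N}\nu_p(f_a(n)) = \sum_{k\geq 1}\#\{n\leq N: p^k\mid f_a(n)\}.
\]
Since $f_a(n)\ll N^d + |a| \ll N^d$ (using $|a|\ll N^{d-1}$), any $k$ contributing to the sum satisfies $p^k\leq \max_{n\leq N}|f_a(n)| \ll N^d$, so the outer sum ranges over $1\leq k\leq K$ with $K = O(\log N/\log p)$.

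Next, I invoke Hensel's lemma. Because $p\nmid D(a)$, the polynomial $f_a$ has only simple roots modulo $p$, and each simple root lifts uniquely to a root modulo $p^k$ for every $k\geq 1$. Consequently
\[
\#\{n\bmod p^k: f_a(n)\equiv 0\bmod p^k\} = \rho(a;p) \quad \text{for every }k\geq 1.
\]
Splitting $[1,N]$ into complete residue classes modulo $p^k$ plus a leftover block, this gives
\[
\#\{n\leq N: p^k\mid f_a(n)\} = \frac{N}{p^k}\rho(a;p) + O(\rho(a;p)).
\]

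Finally, summing over $1\leq k\leq K$ and using $\rho(a;p)\leq d = O(1)$, the main term is
\[
N\rho(a;p)\sum_{k=1}^{K}\frac{1}{p^k} = \frac{N\rho(a;p)}{p-1} + O\!\Bigl(\frac{N\rho(a;p)}{p^K}\Bigr) = \frac{N\rho(a;p)}{p-1} + O(1),
\]
while the total error from the $O(\rho(a;p))$ terms is $O(K) = O(\log N/\log p)$. Combining these yields the claimed formula. The only real subtlety is applying Hensel correctly to conclude that the count of solutions modulo $p^k$ is independent of $k$; once that is in hand everything reduces to a geometric sum with a uniformly bounded multiplicity.
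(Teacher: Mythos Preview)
Your proposal is correct and follows precisely the route the paper indicates: the paper does not actually write out a proof of this lemma but simply says ``Using Hensel's lemma, it is easy to see that \ldots'' and refers to Nagell and to Cilleruelo's Lemma~4. Your argument---rewriting $\alpha_p(N)$ as $\sum_{k\ge 1}\#\{n\le N:p^k\mid f_a(n)\}$, invoking Hensel to get $\rho(a;p^k)=\rho(a;p)$ for $p\nmid D(a)$, and then summing the resulting geometric series with an $O(\rho(a;p))$ boundary error at each level---is exactly the standard justification behind that citation. One small point worth tightening: your claim that $N\rho(a;p)/p^K=O(1)$ uses that $p^K\gg N$, which holds because if $\rho(a;p)>0$ then Hensel guarantees a root modulo every $p^k$, so the count at level $k$ is positive whenever $p^k\le N$ and hence $K\ge \lfloor\log N/\log p\rfloor$; you might make that explicit.
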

  Consequently, we find that in \eqref{initial division},
  \[
  \sum_{\substack{p\leq N \\p\nmid D(a)}}\alpha_p(N)\log p  = NC_N(a) + O(N)
  \]
  where
  \[
  C_N(a):=  \sum_{\substack{ p\leq N \\ p\nmid D(a) }}\frac{\log p}{p-1 }\rho (a;p) 
 \]
 Therefore we have proven Proposition~\ref{prop: decomp}.

\section{Bounding $\Bad_N$ almost surely}
%One of the ingredients is to bound the contribution $\Bad_N(a)$ of bad primes, namely primes dividing $D(a)$, defined as
Recall that we defined
\[
\Bad_N(a) = \sum_{\substack{p\leq N\\p\mid D(a)}} \log p \sum_{n\leq N} \#\{k\geq 1: p^k\mid f_0(n)-a\} 
\]
(we assume that $f_0(x)-a$ is irreducible).

We denote the averaging operator over $|a|\leq T$ such that $f_0(x)-a$ is irreducible by   
\[
\ave{\bullet} =\frac 1{\#\{  |a|\leq T:  f_0(x)-a \; {\rm is \; irreducible}\}} \sum_{\substack{|a|\leq T\\ f_0(x)-a \;{\rm irreducible} }}\bullet 
\]

The number of $|a|\leq T$ for which $f_0(x)-a$ is reducible is $O(\sqrt{T})$ (Lemma~\ref{lem:Serre}), so that 
\begin{equation}\label{def of ave}
\ave{\bullet} = \frac 1{2T+O(\sqrt{T})}\sum_{\substack{|a|\leq T\\ f_0(x)-a \;{\rm irreducible} }}\bullet 
\end{equation}
%We take \fbox{$T=N^2$. } 

\begin{prop}\label{prop:Bad in general}
If $T\geq N$  but $\log T \ll \log N$ then 
\[
\ave{\Bad_N} \ll N\log\log N
\]
\end{prop}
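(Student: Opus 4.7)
The plan is a first-moment estimate obtained by swapping the order of summation. Since $\Bad_N(a)\geq 0$, we upper bound $\ave{\Bad_N}$ by $\tfrac{1}{T}\sum_{|a|\leq T}\Bad_N(a)$ (the $O(\sqrt{T})$ reducible values contribute at most $O(\sqrt{T}\cdot N\log N/T)=o(N)$ via the trivial bound $\Bad_N(a)\leq \log P_a(N)\ll N\log N$). Expanding $\nu_p(m)=\sum_{k\geq 1}\chp[p^k\mid m]$ and interchanging summations, we arrive at
\[
\ave{\Bad_N}\;\ll\;\frac{1}{T}\sum_{p\leq N}\log p\sum_{n\leq N}\sum_{k\geq 1}\#\bigl\{|a|\leq T:\; p\mid D(a),\; p^k\mid f_0(n)-a\bigr\}.
\]

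The crucial observation is a decoupling of the two divisibility conditions: $p^k\mid f_0(n)-a$ forces $a\equiv f_0(n)\pmod p$, hence $D(a)\equiv D(f_0(n))\pmod p$, so $p\mid D(a)$ becomes the $a$-independent condition $p\mid D(f_0(n))$. Writing $M(p,N):=\#\{n\leq N:p\mid D(f_0(n))\}$, the innermost count is at most $\chp[p\mid D(f_0(n))]\cdot(2T/p^k+1)$. The effective range of $k$ is $k\ll \log N/\log p$, since $|f_0(n)-a|\ll N^d$ under $\log T\ll\log N$; summing the geometric series in $k$ leaves
\[
\ave{\Bad_N}\;\ll\;\sum_{p\leq N}\frac{\log p}{p}\,M(p,N)\;+\;\frac{\log N}{T}\sum_{p\leq N}M(p,N).
\]

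It remains to bound $M(p,N)$. The polynomial $D(f_0(X))\in\Z[X]$ has degree $d(d-1)$ and is not identically zero (for generic $n$ the shifted polynomial $f_0(x)-f_0(n)$ has distinct roots), so for every prime $p$ outside a finite set $S=S(f_0)$ consisting of those dividing its leading coefficient it has at most $d(d-1)$ roots modulo $p$, yielding $M(p,N)\ll N/p+1$; the primes $p\in S$ are handled by the trivial $M(p,N)\leq N$ and contribute only $O_{f_0}(N)$ in total. Inserting these, the first sum is $\ll \sum_p\log p\,(N/p^2+1/p)\ll N$, while the second is $\ll (N\log N/T)\log\log N\ll \log N\log\log N$ once $T\geq N$. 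Combining yields $\ave{\Bad_N}\ll N$, comfortably stronger than the stated $N\log\log N$. I expect the only real point requiring care to be the decoupling step — confirming that the two congruences on $a$ intertwine to eliminate the $a$-dependence in $p\mid D(a)$ — though this is a one-line check once one observes that $a$ is already pinned modulo $p$.
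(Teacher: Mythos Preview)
Your argument is correct and in fact yields the stronger bound $\ave{\Bad_N}\ll N$. The decoupling step is exactly right: since $D\in\Z[a]$, the congruence $a\equiv f_0(n)\pmod p$ forces $D(a)\equiv D(f_0(n))\pmod p$, so the bad-prime condition becomes a condition on $n$ alone. That $D(f_0(X))$ is not identically zero follows as you say (the range of $f_0$ is infinite while $D$ has finitely many zeros), and for all but the finitely many primes dividing its content it has at most $d(d-1)$ roots modulo $p$, giving $M(p,N)\ll N/p+1$. One small imprecision: when you ``extend to all $|a|\le T$'' you should first truncate the $k$-sum at $k\ll \log N/\log p$ (valid for irreducible $a$) and only then drop the irreducibility restriction, since $\Bad_N(a)$ is not well-defined when $f_0(n)=a$ for some $n\le N$; your parenthetical about reducible values is then unnecessary.

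Your route is genuinely different from the paper's. The paper splits $\Bad_N=B_1+B_2$ according to $k=1$ versus $k\ge 2$; it bounds $B_1(a)\ll N\log\log N$ \emph{deterministically} for each $a$ via the elementary estimate $\sum_{p\mid k}\frac{\log p}{p}\ll\log\log k$ applied with $k=D(a)$, and bounds $\ave{B_2}\ll N$ by averaging, simply dropping the condition $p\mid D(a)$ altogether. Your decoupling idea instead \emph{keeps} the condition $p\mid D(a)$ but turns it into a constraint on $n$, which saves a factor of roughly $p$ in the $k=1$ contribution and explains why you reach $O(N)$ rather than $O(N\log\log N)$. The paper's approach has the advantage that the dominant piece $B_1$ is controlled pointwise in $a$, not just on average; your approach is cleaner as a first-moment computation and gives a sharper average bound.
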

\begin{proof}
We separate out the contribution $B_1(a)$ of $k=1$ and the contribution $B_2(a)$ of the  remaining $k\geq 2$:
\[
\Bad_N(A) = B_1(a)+B_2(a)
\] 
where
\[
B_1(a) = \sum_{\substack{p\leq N\\p\mid D(a)}} \log p \#\{n\leq N: f_0(n)=a\bmod p\}
\]
and
\[
B_2(a) = \sum_{\substack{p\leq N\\p\mid D(a)}} \log p \sum_{n\leq N} \#\{k\geq 2: p^k\mid f_0(n)-a\}
\]
We will show that 
\begin{equation}\label{individ bound for B1} 
B_1(a)\ll N\log\log N
\end{equation} and that 
\[
\ave{B_2}\ll N,
\]
 proving Proposition~\ref{prop:Bad in general}

We first show that 
\[
B_1(a) \ll N\log\log |D(a)|
\]
which suffices for \eqref{individ bound for B1} since $\log |D(a)|\ll \log T \ll \log N$. 

Indeed, for $p\leq N$ we have
\begin{equation*}
\begin{split}
\#\{n\leq N: f_0(n)=a\bmod p\} &=(\frac Np+O(1) ) \#\{n\bmod p: f_0(n)=a\bmod p\} 
\\
&\ll \frac Np \rho(a;p)
\end{split}
\end{equation*}
where
 \[
\rho(a;p):= \#\{n\bmod p: f_0(n)=a\bmod p\} 
\]
which we see by dividing the interval $[1,N]$ into consecutive intervals of length $p$.

Since $f_0(x)$ is a monic polynomial of degree $d$, it is nonzero modulo $p$ and still of degree $\leq d$, hence $\rho(a;p)\leq d$. 
Thus 
\[
B_1(a) \ll \sum_{\substack{p\leq N\\p\mid D(a)}} \log p \; \frac Np     \rho(a;p)
\ll N\sum_{p\mid D(a)} \frac{\log p}{p}   
\]
We use:\footnote{This is standard.} 
\begin{lem}\label{lem divisor sum}
For $k>1$ 
\begin{equation*}
\sum_{p\mid k} \frac{\log p}{p}\ll \log \log k
\end{equation*}
\end{lem}
\begin{proof}
Indeed, splitting the sum into small primes $p\leq \log k$, and the rest (where the summands are at most $\log \log k/\log k$),   we get
\begin{equation*}
\begin{split}
\sum_{p\mid k} \frac{\log p}{p}& 
\leq  \sum_{\substack{p\mid k\\ p\leq \log k}} \frac{\log p}{p} 
+\sum_{\substack{p\mid k\\p>\log k}} \frac{\log p}{p}
\\
& \ll \sum_{p\leq \log k} \frac{\log p}{p}+ \frac{\log \log k}{\log k }\sum_{p\mid k} 1
\\
&\ll \log\log k +  \frac{\log \log k}{\log k }  \cdot\frac{\log k }{\log \log k}
\\
&\ll \log\log k %\ll \log\log N
\end{split}
\end{equation*}
since the number of  distinct prime divisors of $k$ is $\ll \log k/\log\log k$. 
\end{proof}

Therefore
\[
\sum_{p\mid D(a)} \frac{\log p}{p} \ll \log\log |D(a)| \ll \log \log |a|
\]
and obtain 
\[
B_1(a)\ll N\log\log |D(a)|
\]

Next we bound the mean value of $B_2(a)$
\[
\ave{B_2} = \frac 1{2T+O(\sqrt{T})} \sum_{\substack{|a|\leq T\\ f_0(x)-a\\   {\rm irreducible}  }} \sum_{\substack{p\leq N\\ p\mid D(a)}} \log p \sum_{k\geq 2} \mathbf{1}(f_0(n)=a \bmod p^k )
\]
Now if $f_0(x)-a$ is irreducible, then $f_0(n)-a\neq 0$ and so  if $p^k\mid f_0(n)-a$ with $n\leq N$ then $k\ll \log N/\log p$, 
so we restrict the summation to $2\leq k\ll \log N/\log p$. Moreover, given $n$, the condition $f_0(n)=a\bmod p^k$ determines $a$ modulo $p^k$, so there are $\ll T/p^k+1$ choices for $a$.  
Hence we  may bound
\begin{equation*}
\begin{split}
\ave{B_2} &\ll \frac 1T \sum_{p\leq N} \log p \sum_{n\leq N} \sum_{2\leq k\ll \log N/\log p} (\frac T{p^k}+1) 
\\
&=
\frac NT \sum_{p\leq N} \log p  \sum_{2\leq k\ll \log N/\log p} (\frac T{p^k}+1) 
\end{split}
\end{equation*}
we have
\begin{multline*}
\frac NT \sum_{p\leq N}\log p   \sum_{2\leq k\ll \log N/\log p} \frac T{p^k}\ll N \sum_{p\leq N}  \log p  \sum_{k\geq 2} \frac 1{p^k} 
\ll N\sum_{p\leq N} \frac{\log p}{p^2} \ll N
\end{multline*}
 and
 \[
 \frac NT \sum_{p\leq N} \log p \sum_{2\leq k\ll \log N/\log p}1 \ll  \frac{N}{T} \sum_{p\leq N} \log p \cdot \frac{\log N}{\log p} \ll \frac{N^2}{T}
 \]
 Altogether we find
 \[
 \ave{B_2} \ll N+\frac{N^2}{T}
 \]
 which is $O(N)$ if $T\geq N$. 
 \end{proof}

 \section{Averaging $\Delta_N(a)$}

Let 
\[
\Delta_N(a) = \sum_{p>N} \log p \Big(\alpha_p(N) -\beta_p(N) \Big)
\]
Then clearly $\Delta_N\geq 0$, and we want to show
\begin{prop}\label{prop: mean of DeltaN}
Assume that $T\geq N\log N$, but $\log T\ll \log N$. Then  
\[ 
\ave{\Delta_N} \ll_{f_0} N\log\log N
\]
\end{prop}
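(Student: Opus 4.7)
The plan is to bound $\Delta_N(a)$ pointwise by a sum over pairs $(n_1,n_2) \in [1,N]^2$ and then take expectation. The key pointwise observation is that for $p > N$, $\alpha_p(a) - \beta_p(a) = 0$ unless the fiber $S_a(p) := \{n \leq N : p \mid f_0(n) - a\}$ has at least two elements. When $|S_a(p)| \geq 2$, writing the valuations $e_1 \geq e_2 \geq \cdots \geq e_r \geq 1$ for elements of $S_a(p)$, one has $\alpha_p - \beta_p = e_2 + \cdots + e_r$, and the ultrametric inequality $\nu_p(f_0(n_i) - f_0(n_j)) \geq \min(e_i,e_j)$ gives the bound
\[
\alpha_p - \beta_p \;\leq\; \sum_{\substack{n_1 < n_2 \leq N \\ n_1, n_2 \in S_a(p)}} \nu_p\bigl(f_0(n_1) - f_0(n_2)\bigr).
\]

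Setting $M = M(n_1,n_2) := f_0(n_1) - f_0(n_2)$, a nonzero integer of size $|M| \ll N^d$, and swapping the order of summation yields
\[
\Delta_N(a) \;\leq\; \sum_{n_1 < n_2 \leq N} \sum_{\substack{p > N \\ p \mid M}} \nu_p(M) \log p \cdot \mathbf{1}\{a \equiv f_0(n_1) \!\!\pmod p\}.
\]
Averaging over $|a| \leq T$ using the bound $\#\{|a| \leq T : a \equiv c \!\!\pmod p\} \leq 2T/p + 1$ gives
\[
\ave{\Delta_N} \;\ll\; \sum_{n_1 < n_2} \sum_{\substack{p > N \\ p \mid M}} \nu_p(M) \log p \left(\frac{1}{p} + \frac{1}{T}\right).
\]
The $1/T$ part is immediate: $\sum_{p \mid M} \nu_p(M) \log p = \log|M| \ll \log N$, so summing over the $O(N^2)$ pairs and dividing by $T \geq N \log N$ gives an error of size $O(N)$.

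The main $1/p$ term is where the divisor-sum Lemma~\ref{lem divisor sum} enters decisively: for each pair, $\sum_{p \mid M} \log p/p \ll \log\log|M| \ll \log\log N$, using that $|M| \ll N^d$ and $\log T \ll \log N$. Since $\nu_p(M) \leq d$ uniformly for $p > N$ (because $p^{\nu_p(M)} \leq |M| \ll N^d$), the multiplicities are absorbed into an absolute constant, and the per-pair contribution is $\ll \log\log N$. The desired bound $\ave{\Delta_N} \ll N \log\log N$ then follows by a careful combination with the total count of pairs, using the factorization $M(n_1,n_2) = (n_1 - n_2)\,h(n_1,n_2)$ with $h$ a polynomial of degree $d-1$ in each variable to control how often a given large prime $p$ appears.

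The hard part is exactly this final combinatorial balancing. A brute-force per-pair application of the Lemma loses a full factor of $N$ (giving $N^2 \log\log N$), while a crude swap to a sum over $p$ with the trivial bound $\pi(p) := \#\{(n_1,n_2): p \mid M\} \leq (d-1)N/2$ loses a factor $\log N / \log\log N$ via Mertens' theorem (giving $N\log N$). Threading between these two weak bounds — by using that the sparsity of large prime factors in $h(n_1,n_2)$ is coupled with the $1/p$ weight — together with the hypothesis $T \geq N\log N$ to kill the residual $O(N^2 \log N / T)$ term, is the delicate step where the polynomial structure is essential.
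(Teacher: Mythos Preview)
Your reduction is sound and essentially matches the paper's: the pointwise inequality $\alpha_p-\beta_p\le \sum_{n_1<n_2\in S_a(p)}\nu_p\bigl(f_0(n_1)-f_0(n_2)\bigr)$ is a valid (and slightly sharper) substitute for the paper's cruder $\alpha_p-\beta_p\le \alpha_p=O_d(1)$, and after averaging you arrive at the same expression the paper does, namely
\[
\sum_{1\le n_1<n_2\le N}\;\sum_{\substack{p>N\\ p\mid h(n_1,n_2)}}\frac{\log p}{p}\,,
\]
up to the bounded factor $\nu_p(M)\le d-1$ and the $O(N^2\log N/T)=O(N)$ error you handle correctly.

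The genuine gap is that you stop precisely where the proof begins. You correctly diagnose that per-pair use of Lemma~\ref{lem divisor sum} gives $N^2\log\log N$ and that a global swap with the bound $\#\{(n_1,n_2):p\mid h\}\le (d-1)N$ gives $N\log N$, and then you assert that ``threading between these two weak bounds'' yields the result --- but you never thread. The paper's actual argument, which is the entire content of Proposition~\ref{prop: mean of DeltaN}, is to split the prime range at $N\log N$. For $N<p\le N\log N$ one swaps and uses the pair-count bound $(d-1)N$, so this range contributes
\[
\ll N\sum_{N<p\le N\log N}\frac{\log p}{p}
= N\bigl(\log(N\log N)-\log N+O(1)\bigr)\sim N\log\log N
\]
by Mertens. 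For $p>N\log N$ one uses $\log p/p<1/N$ together with the observation that at most $d-1$ primes exceeding $N$ can divide $h(n_1,n_2)$ (since $|h|\ll N^{d-1}$), giving $\ll N^{-1}\cdot N^2\cdot(d-1)\ll N$. This hybrid is neither of the two naive approaches you reject; it is exactly the missing step, and without it your proposal is a strategy outline rather than a proof.
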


%\begin{remark} \marginpar{do we need this remark?}
%We can restrict the average to those $a$'s so that $f_a(x)$ is irreducible, because the number of exceptions is $O(\sqrt{T})$  
%(Lemma~\ref{lem:Serre}), while individually we have $\Delta(a)\ll N\log N$. 
% \end{remark}

\subsection{Preparations}

Let
\[
G(m,n) =\frac{f_0(m)-f_0(n)}{m-n}
\]
which, given $n$, is a (nonzero) polynomial in $m$, of degree $\leq d-1$. If $f_0$ is monic then so is $G(m,n)$ so its degree is exactly $d-1$. 

\begin{lem}\label{lem:G(m,n)} 
There is some $C_1=C_1(f_0)$ so that if  $m,n\geq 1$ and $\max(m,n)>C_1$ then $G(m,n)\neq 0$.
\end{lem}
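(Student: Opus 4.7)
My plan is to reduce the non-vanishing of $G(m,n)$ to elementary monotonicity properties of $f_0$. The factorization $f_0(m)-f_0(n)=(m-n)G(m,n)$ shows that for $m\neq n$ one has $G(m,n)=0$ if and only if $f_0(m)=f_0(n)$, while on the diagonal $G$ extends as a polynomial to $G(n,n)=f_0'(n)$. So it is enough to choose $C_1$ large enough that neither type of coincidence can occur when $\max(m,n)>C_1$.

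First I would pick $M_0$ so that $f_0'(x)\neq 0$ for every real $x>M_0$; this is possible because $f_0'$ is a nonzero polynomial of degree $d-1\geq 2$, hence has only finitely many real roots. Consequently $f_0$ is strictly monotonic on $[M_0,\infty)$, so injective there, and $|f_0(x)|\to\infty$. I would then enlarge $M_0$ to some $C_1>M_0$ with the property that $|f_0(m)|>\max_{1\leq k\leq M_0}|f_0(k)|$ for every integer $m>C_1$, which is possible by the growth just noted.

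Finally, given $m,n\geq 1$ with, without loss of generality, $m>C_1$, I would split into three cases. If $m=n$, then $G(m,n)=f_0'(m)\neq 0$ by the choice of $M_0$. If $m\neq n$ and $n\leq M_0$, then $|f_0(m)|>|f_0(n)|$ by the choice of $C_1$, so certainly $f_0(m)\neq f_0(n)$ and $G(m,n)\neq 0$. If $m\neq n$ and $n>M_0$, then both arguments lie in the interval $[M_0,\infty)$ of injectivity, and once more $f_0(m)\neq f_0(n)$. In every case $G(m,n)\neq 0$. No serious obstacle arises in this argument; the only small point to watch is the diagonal, but the polynomial extension $G(n,n)=f_0'(n)$ is unambiguous and is handled by the same monotonicity input.
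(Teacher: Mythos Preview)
Your argument is correct, and it takes a genuinely different route from the paper. The paper proceeds by a direct positivity estimate: writing $G(m,n)=\sum_{j=1}^{d}c_j\frac{m^j-n^j}{m-n}$ and using $\frac{m^j-n^j}{m-n}=m^{j-1}+m^{j-2}n+\cdots+n^{j-1}$, it bounds the leading summand from below by $\max(m,n)^{d-1}$ and each lower-order summand from above by $j\max(m,n)^{j-1}$, concluding that $G(m,n)>0$ once $\max(m,n)$ dominates the lower-order coefficients. Your proof instead reduces the off-diagonal case $m\neq n$ to the injectivity of $f_0$ on a ray (via eventual nonvanishing of $f_0'$), and handles the diagonal $m=n$ by the identification $G(n,n)=f_0'(n)$. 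The paper's approach yields the slightly stronger conclusion $G(m,n)>0$ and, in principle, an explicit $C_1$ in terms of the coefficients of $f_0$; your approach is more conceptual and requires no coefficient bookkeeping, at the cost of treating the diagonal separately. One cosmetic point: you write that $f_0'$ has degree $d-1\geq 2$, but all you actually use is that $f_0'$ is a nonzero polynomial (hence with finitely many real roots), which holds for any $d\geq 2$.
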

\begin{proof}
We have 
\[
G(m,n)= \sum_{j=1}^d c_j \frac{m^j-n^j}{m-n}  
\]
and if $j\geq 2$ then for $n=\max(m,n)$, 
\[
\frac{m^j-n^j}{m-n} = n^{j-1}+n^{j-2}m+\dots +m^{j-1} \leq jn^{j-1}
\]
while 
\[
\frac{m^d-n^d}{m-n} = n^{d-1}+n^{d-2}m+\dots +m^{d-1} >n^{d-1} 
\]
so that  (assuming $f_0$ monic, so $c_d=1$)
\[
G(m,n)\geq  \frac{m^d-n^d}{m-n}-\sum_{j=2}^{d-1} |c_j| \frac{m^d-n^d}{m-n} -|c_1| >n^{d-1}-\sum_{j=1}^{d-1} |c_j| j n^{j-1}
\]
which is clearly positive once $n$ is sufficiently large in terms of the coefficients $c_1,\dots ,c_{d-1}$ of $f_0$. 
\end{proof}

\begin{lem}\label{lem: bound on alpha} 
There is some $C(d)>0$ so that for all $|a|\leq N^d$, such that  $f_a(x)=f_0(x)-a$ is irreducible, we have $\alpha_p(N)\leq C(d)$ if  $p>N$.  Moreover $\alpha_p(N)=0$ unless $p\ll N^d+|a|$. 
\end{lem}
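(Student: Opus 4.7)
The plan is to exploit the size constraint $p>N$ together with the monicity of $f_0$. Since distinct $m\neq n$ in $[1,N]$ satisfy $|m-n|<N<p$, the congruence $p\mid f_0(m)-f_0(n)=(m-n)G(m,n)$ forces $p\mid G(m,n)$, where $G$ is the divided difference polynomial from Lemma~\ref{lem:G(m,n)}.

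With this in hand, I would first show that $\#\{n\leq N:p\mid f_0(n)-a\}\leq d$. If this set is empty then $\alpha_p(N)=0$ and we are done; otherwise fix some $n_0$ in the set. For any other $n$ in the set, subtracting yields $p\mid f_0(n)-f_0(n_0)$, so by the above $p\mid G(n,n_0)$. Because $f_0$ is monic, $G(x,n_0)\in\Z[x]$ is monic of degree $d-1$, so its reduction mod $p$ is still of exact degree $d-1$ in $\FF_p[x]$ and has at most $d-1$ roots. As $N<p$, this restricts $n$ to at most $d-1$ values in $[1,N]$, and including $n_0$ gives the bound of $d$.

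Next I would bound each nonzero valuation $\nu_p(f_0(n)-a)$. Irreducibility of $f_0(x)-a$ guarantees $f_0(n)-a\neq 0$, and the trivial estimate $|f_0(n)-a|\leq |f_0(n)|+|a|\ll_{f_0} N^d$ combined with $p>N$ gives $\nu_p(f_0(n)-a)\leq d+O_{f_0}(1/\log N)$, so at most $d+1$ once $N$ is large in terms of $f_0$. Summing over the at most $d$ contributing values yields $\alpha_p(N)\ll d^2$, giving a constant $C(d)$ of the required form. The ``moreover'' clause is immediate: if $\alpha_p(N)>0$ then some nonzero $f_0(n)-a$ with $n\leq N$ is divisible by $p$, whence $p\leq|f_0(n)-a|\ll N^d+|a|$. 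I do not anticipate any genuine obstacle; the one point to check is that the leading coefficient of $G(x,n_0)$ survives reduction mod $p$, which is automatic since monicity of $f_0$ makes $G$ monic as well.
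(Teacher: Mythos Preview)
Your proposal is correct and follows essentially the same approach as the paper: bound the number of $n\leq N$ with $p\mid f_a(n)$ by $d$ using a degree argument, bound each nonzero $\nu_p(f_a(n))$ by $O_d(1)$ using the size estimate $|f_a(n)|\ll N^d$ and $p>N$, and multiply. The only difference is cosmetic: the paper observes directly that $f_a\bmod p$ has degree $d$ and hence at most $d$ roots, whereas you take the slightly longer route through the divided difference $G(x,n_0)$, which amounts to the usual inductive proof of that same fact.
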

\begin{proof}
We have by definition 
\[
\alpha_p(N) = \sum_{n\leq N}\sum_{k\geq 1} \mathbf{1}(f_0(n)=a\bmod p^k) = \sum_{k\geq 1}\#\{n\leq N: f_0(n)=a\bmod p^k\}
\]

Since we assume that $f_a(x)=f_0(x)-a$ is irreducible, hence has no rational zeros, we must have, if $p\mid f_a(n)$, that $p\leq |f_a(n)|\ll N^d+|a|\ll N^d$ uniformly in $|a|\leq T$ (recall $T\leq N^d$). Hence $\alpha_p(N)=0$ for $p\gg N^d$. 

Given $n$ so that $p\mid f_a(n)$, with $p>N$, we claim that there are at most $d$ such integers: 
\[
\#\{ m\leq N: f_a(m)=f_a(n)\bmod p\} \leq d 
\]
Indeed, for any $c\in \Z/p\Z$, the number of solutions $m\bmod p$ of $f_a(m)=c\bmod p$ is at most $d$, and since $p>N$, this certainly applies to those $m\leq N$ which solve $f_a(m) = c$ with $c=f_a(n)$. 

%if $p\mid f_a(n)$ and $p\mid f_a(m)$ then also $p$ divides $f_a(m)-f_a(n)=f_0(m)-f_0(n)=(m-n)G(m,n)$. 
%If $p\mid   (m-n)G(m,n)$ then because $p>N$ while $1\leq |m-n|\leq N-1$, we must have $p\mid G(m,n)$. 
%Thus given $n$, the possible $m$'s with $G(m,n)=0\bmod p$ are solutions of a polynomial equation of degree $d-1$ mod $p$, 
%so there are at most $d-1$ possibilities. 
 
Moreover, if $p>N$, the maximal $k$ so that $p^k\mid f_0(n)-a$ for some $n\leq N$ is, because we assume $f_a(n)\neq 0$,  
\[
\ll \frac{ \log (N^d+|a|)}{\log p}   =O_d(1)
\]
 because we assume that $|a|\leq T$ with $\log T\ll \log N$. 

Therefore
\[
\alpha_p(N) = \sum_{k\geq 1}\#\{n\leq N: f(n)=0\bmod p^k\}  \leq \sum_{1\leq k\ll O_d(1)} d =O_d(1)
\]
\end{proof}

\subsection{A preliminary bound on $\Delta_N(a)$}
\begin{lem}\label{prelim bound for Delta}
If $a$ is such that $f_0(x)-a$ has no rational zeros, and $\log  |a|\ll \log N$, then
\begin{equation}\label{second bound for Delta} 
\Delta_N(a)\ll \sum_{\substack{1\leq m<n\leq N\\ G(m,n)\neq 0}} \sum_{\substack{N<p\ll N^{d}\\ p\mid f_0(m)-a\\ p\mid G(m,n)}} \log p
+O(\log N) 
\end{equation}
\end{lem}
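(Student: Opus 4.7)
The plan is to exploit Lemma~\ref{lem: bound on alpha}, which both restricts the range of primes to $N < p \ll N^d$ and gives $\alpha_p(N) \leq C(d)$ uniformly in this range. Write $\Delta_N(a) = \sum_{N < p \ll N^d} \log p\,(\alpha_p(N) - \beta_p(N))$. Setting $S_p := \{n \leq N : p \mid f_0(n)-a\}$, the difference $\alpha_p(N) - \beta_p(N)$ vanishes unless $|S_p| \geq 2$, and in that case it is bounded by $\alpha_p(N) \leq C(d) = O_d(1)$. Hence
\[
\Delta_N(a) \ll \sum_{N < p \ll N^d} \log p \cdot \mathbf{1}\bigl(\exists\, 1 \leq m < n \leq N : p \mid f_0(m)-a \text{ and } p \mid f_0(n)-a\bigr).
\]

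The key step is to translate the joint divisibility into a statement about $G(m,n)$. If $p \mid f_0(m)-a$ and $p \mid f_0(n)-a$ then
\[
p \mid (f_0(m)-a) - (f_0(n)-a) = f_0(m) - f_0(n) = (m-n)\,G(m,n).
\]
Since $p > N \geq |m-n| \geq 1$, the prime $p$ cannot divide $m-n$, so $p \mid G(m,n)$. Replacing the indicator by the (possibly overcounting) sum over pairs $(m,n)$ gives
\[
\Delta_N(a) \ll \sum_{1 \leq m < n \leq N} \sum_{\substack{N < p \ll N^d \\ p \mid f_0(m)-a \\ p \mid G(m,n)}} \log p.
\]

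It remains to separate off pairs with $G(m,n) = 0$, for which the condition $p \mid G(m,n)$ is vacuous. By Lemma~\ref{lem:G(m,n)} such pairs satisfy $\max(m,n) \leq C_1 = C_1(f_0)$, so there are only $O_{f_0}(1)$ of them. For each such pair, $f_0(m)-a \neq 0$ (since $f_0(x)-a$ has no rational zeros) and $\log|f_0(m)-a| \ll \log N$ using $\log|a| \ll \log N$; hence the number of primes $p > N$ dividing $f_0(m)-a$ is $O_d(1)$, each contributing $\log p \ll \log N$. The total degenerate contribution is therefore $O(\log N)$, yielding the claimed bound.

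There is no serious obstacle here — the argument is a clean decomposition. The only point requiring care is ensuring that the strict inequality $p > N$ really does rule out $p \mid (m-n)$, which is what isolates $G(m,n)$ as the relevant divisibility constraint and makes this preliminary bound the starting point for the averaging argument over $a$ in the proof of Proposition~\ref{prop: mean of DeltaN}.
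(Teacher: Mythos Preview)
Your proof is correct and follows essentially the same route as the paper: bound $\alpha_p(N)-\beta_p(N)$ by $O_d(1)$ via Lemma~\ref{lem: bound on alpha}, observe that a nonzero difference forces two distinct $m<n\leq N$ with $p\mid f_a(m)$ and $p\mid f_a(n)$, convert the second divisibility into $p\mid G(m,n)$ using $p>N\geq n-m$, and then isolate the finitely many pairs with $G(m,n)=0$ via Lemma~\ref{lem:G(m,n)}. The only cosmetic difference is that the paper phrases the degenerate contribution as $\ll \log|a|$ (via $\#\{p>N:p\mid f_0(m)-a\}\ll \log|a|/\log N$) before invoking $\log|a|\ll\log N$, whereas you absorb this directly into $O_d(1)\cdot O(\log N)$; the content is identical.
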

\begin{proof}
We have $\alpha_p(N)\neq \beta_p(N)$ if and only if there are two distinct integers $m,n\leq N$ so that $p\mid f_a(m)$ and $p\mid f_a(n)$.  
Using Lemma~\ref{lem: bound on alpha}, we see that $\alpha_p(N)-\beta_p(N) = O_d(1)$ for $p>N$, and hence applying a union bound we obtain, if $a$ is such that $f_a(x)$ has no rational zeros, 
%\footnote{Recall that in that case, we need only consider $p\ll N^d$},  
\[
\Delta_N(a) \ll_d \sum_{1\leq m<n\leq N} \sum_{\substack{N<p\ll N^d\\ p\mid f_0(m)-a\\p\mid f_0(n)-a}} \log p
\]
Note that  if $p\mid f_a(m)$ and $p\mid f_a(n)$ then $p\mid f_a(m)-f_a(n) = (m-n)G(m,n)$ and so since 
$p\nmid m-n$ (because $1\leq n-m\leq N-1<p$), we  must have $p\mid G(m,n)$. 
%Moreover, in the sum over $p$, 
%we must have $p\ll C(f_0)N^{d-1}$, because if $p\mid G(m,n)$ then 
%\marginpar{assuming $G(m,n)\neq 0$!!}
%\[
%p\leq \max_{1\leq m<n\leq N}|G(m,n)| \leq c(f_0)N^{d-1}
%\]
Thus 
\begin{equation}\label{first bound for Delta} 
\Delta_N(a)\ll \sum_{1\leq m<n\leq N} \sum_{\substack{N<p\ll N^{d}\\ p\mid f_0(m)-a\\ p\mid G(m,n)}} \log p
\end{equation}

%\marginpar{inserted patch about $G(m,n)=0$ here}

We break off the terms corresponding to $G(m,n)=0$. According to Lemma~\ref{lem:G(m,n)}, the condition $G(m,n)=0$ forces $m,n\leq C_1$ to be bounded. Hence the contribution of such pairs to \eqref{first bound for Delta} is bounded by 
\[
\ll \sum_{m,n\leq C_1} \sum_{\substack{N<p\ll N^{d}\\ p\mid f_0(m)-a  }} \log p \ll \log N \max_{m\leq C_1}\#\{p>N:p\mid a-f_0(m)\} 
\]
Note that $0<|f_0(m)-a|\ll |a|+1$ if $m\leq C_1$ (we assume that $a$ is such that  $f_0(x)-a$ has no rational zeros, hence $f_0(m)-a\neq 0$, and hence the number of primes $p>N$ dividing $f_0(m)-a$ is at most $\ll \log |a|/\log N$. Hence the contribution of pairs $m<n$ with $G(m,n)=0$ to \eqref{first bound for Delta} is at most $\ll \log |a| $. Thus
\begin{equation*}   
\Delta_N(a)\ll \sum_{\substack{1\leq m<n\leq N\\ G(m,n)\neq 0}} \sum_{\substack{N<p\ll N^{d}\\ p\mid f(m)\\ p\mid G(m,n)}} \log p
+O\Big( \log |a|   \Big)
\end{equation*}
Finally, the assumption $\log |a|\ll \log N$ gives \eqref{second bound for Delta}.
\end{proof}

\subsection{Proof of Proposition~\ref{prop: mean of DeltaN}}

Now to average over $|a|\leq T$ (such that $f_0(x)-a$ is irreducible). 
Using \eqref{second bound for Delta}, noting that  $\log |a| \ll \log T\ll \log N$   gives 
\[
\ave{\Delta_N}  \ll 
\sum_{\substack{1\leq m<n\leq N\\G(m,n)\neq 0}} \sum_{\substack{ N<p\ll N^{d} \\  p\mid G(m,n) }} \log p \; \frac 1T \#\{|a|\leq T: p\mid a-f_0(m)\}
+O (\log N )
\]

Given $1\leq m<N$, and $N<p\ll N^{d}$, the number of $|a|\leq T$ with $a=f_0(m)\bmod p$ is $\ll T/p+1 $.  Hence 
\begin{equation*}
\begin{split}
\ave{\Delta_N} & \ll  \sum_{\substack{1\leq m<n\leq N\\ G(m,n)\neq 0}} \sum_{\substack{ N<p\ll  N^{d} \\  p\mid G(m,n)}} \frac{\log p}{p} 
+\frac 1T \sum_{\substack{1\leq m<n\leq N\\ G(m,n)\neq 0}}  \sum_{\substack{N<p\ll  N^{d}\\ p\mid G(m,n)}} \log p +O(\log N)
\\
& =:I+II +O(\log N)
\end{split}
\end{equation*}

To treat the sum $II$, we  note if $m,n\leq N$, then $|G(m,n)|\leq C(f_0)N^{d-1}$ and so  there are at most $d-1$ distinct primes $p>N$ which divide $G(m,n)$ (which we assume is non-zero), 
and for these $\log p\ll \log N$. Therefore 
\[
II\ll \frac{\log N}{T} \sum_{1\leq m<n\leq N}(d-1)\ll \frac{N^{2}\log N}{T}
\] 
which is $O(N)$ if $T>N\log N$. 

To treat the sum $I$, we separate the prime sum into primes with $N<p\leq N\log N$ and the remaining large primes $N\log N<p\ll N^{d-1}$ to get
\[
  I \ll\sum_{\substack{1\leq m<n\leq N\\G(m,n)\neq 0}} \sum_{\substack{ N<p<N\log N \\  p\mid G(m,n)}} \frac{\log p}{p} 
  +  \sum_{\substack{1\leq m<n\leq N\\G(m,n)\neq 0}} \sum_{\substack{ N\log N<p\ll N^{d} \\  p\mid G(m,n)}} \frac{\log p}{p} 
\]

We treat the sum over small primes by switching the order of summation
\begin{multline*}
\sum_{\substack{1\leq m<n\leq N\\G(m,n)\neq 0}}\sum_{\substack{ N<p<N\log N \\  p\mid G(m,n)}} \frac{\log p}{p}  
\\
\leq  
\sum_{ N<p<N\log N }  \frac{\log p}{p} \#\{1\leq m<n\leq N: G(m,n)=0\bmod p\}
\end{multline*}
 Now given $m$, the congruence $G(m,n)=0\bmod p$ (if solvable)  determines $n \bmod p$ up to $d-1$ possibilities, since $G(m,n)$ is a monic polynomial of degree $d-1$ in $n$, 
 and since $n\leq N<p$ it means that $n$ is determined as an integer up to $d-1$ possibilities. Hence 
 \[
 \#\{1\leq m<n\leq N: G(m,n)=0\bmod p\}\leq (d-1)N
 \]
 %(the set is empty if $p=2\bmod 3$), 
 and the sum over small primes is bounded by 
 \begin{multline*}
\ll \sum_{ N<p<N\log N }   \frac{\log p}{p} N 
\\
=N\Big\{ \Big( \log( N\log N) +O(1) \Big) -\Big(\log N+O(1)\Big)\Big\} \sim N\log \log N 
 \end{multline*}
 on using Mertens' theorem. 
 
The sum over large primes is treated by using $\log p/p \ll 1/N$ for $p>N\log N$, 
%$1/p\leq 1/(N\log N)$, $\log p\leq \log ( N^{d })\ll \log N$ so that $\log p/p\ll 1/N$, 
giving 
\[
 \sum_{\substack{1\leq m<n\leq N\\G(m,n)\neq 0}} \sum_{\substack{ N\log N<p \ll N^{d} \\  p\mid G(m,n)}} \frac{\log p}{p}  \ll \frac 1{N }
 \sum_{\substack{1\leq m<n\leq N\\G(m,n)\neq 0}} \#\{p>N\log N: p\mid G(m,n)\}
\]
Now given $1\leq m<n\leq N$ with $G(m,n)\neq 0$, there are at most $d-1$ primes $p>N\log N$ dividing $G(m,n)\ll N^{d-1}$,  
so that the contribution of large primes is bounded by 
\[
\ll \frac 1N\sum_{1\leq m<n\leq N} (d-1) \ll N
\]
 This gives $I\ll N\log \log N$, and  hence 
 \[
 \ave{\Delta_N} \ll N\log\log N
 \]
 as claimed. \qed

 \section{Almost sure behaviour of $C_N$}

\subsection{}
 Let $f\in \Z[x]$ be an irreducible polynomial,  and let $\rho_f(p)$ be the number of distinct roots of the polynomial $f$ modulo a prime $p$.  
It is well known that for fixed $f$, the mean value of $\rho_f(p)$ over all primes is $1$ \cite{Nagell 1921}:
 \[
 \frac 1{\pi(x)}\sum_{p\leq x} \rho_f(p) =1+o_f(1) .
 \]
We write
 \[
 \rho_f(p) %=1+\sum_{x\bmod p}\frac 1p\sum_{t\neq 0\bmod p} e(\frac{tf(x)}{p})\
 =1+\sigma_f(p)
 \]
 where $\sigma_f(p)$ is a fluctuating quantity, having mean zero. 
%We have the following expression for $\sigma_f$: 
% \begin{equation}\label{expression for sigma}
% \sigma_f(p) =  \frac 1p\sum_{x\bmod p}\sum_{t\neq 0\bmod p} e(\frac{tf(x)}{p})
% \end{equation}
% where $e(z):=e^{2\pi iz}$. 

 %\marginpar{sort out $\frac{\log p}{p}$ vs $\frac{\log p}{p-1}$}
 Now fix $f_0(x)=x^d+c_{d-1}x^{d-1}+\dots +c_1x\in \Z[x]$, a monic polynomial of degree $d$, and for $a\in \Z$ set
 \[
 f_a(x) = f_0(x)-a
 \]
 Write $\rho(a;p) = \rho_{f_a}(p)$, $\sigma(a;p)=\sigma_{f_a}(p)$. Note that $0\leq \rho(a;p)\leq d$. 
 
   We write  
\begin{equation*}
 C_N(a):=  \sum_{\substack{ p\leq N \\p\nmid \disc(f)}}\frac{\log p}{p-1 }\rho (a;p) 
 = \sum_{ p\leq N }\frac{\log p}{p} -E_N(a) + D_N(a) +O(1)
\end{equation*}
where
\[
D_N(a):=\sum_{\substack{ p\leq N \\p\nmid \disc(f_a)}}\frac{\log p}{p }\sigma(a;p)
\]
and
\[
E_N(a):=\sum_{\substack{p\leq N\\p\mid D(a)}}\frac{\log p}{p }
%\leq \omega(D(a)) \ll\frac{\log|D(a)|}{\log\log |D(a)|}
\]

By Mertens' theorem 
\[
\sum_{p\leq N}\frac{\log p}{p }  = \log N +O(1)
\]

The contribution $E_N(a)$ of primes dividing the discriminant $D(a) = \disc(f_0(x)-a)$ can be bounded individually, for $|a|\leq T\ll N^d$, using Lemma~\ref{lem divisor sum} (assuming $D(a)\neq 0$)
\[
E_N(a) =\sum_{\substack{p\leq N\\p\mid D(a)}}\frac{\log p}{p }\leq \sum_{ p\mid D(a)}\frac{\log p}{p } \ll \log\log |D(a)|
\]
Since $D(a)$ is a polynomial of degree $d-1$ in $a$, and $|a|\leq T\ll N^d$, we find
\[
\sum_{\substack{p\leq N\\p\mid D(a)}}\frac{\log p}{p }\ll  \log\log N 
\]
which is negligible relative to the main term.  
Hence
\[
C_N(a) = \log N  + D_N(a) +O(\log\log N)
\]
In the following part, we will establish the following upper bound on the second moment of $D_N(a)$:
\begin{prop}\label{mean square D}
For  $T\ge N\log N$, the second moment of $D_N(a)$ satisfies:
$$\ave{|D_N|^2}\ll 1$$
\end{prop}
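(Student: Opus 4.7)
The plan is to open the square, split into diagonal and off-diagonal contributions over pairs of primes, and dispose of the $a$-dependent restriction $p\nmid D(a)$ separately. Write $D_N(a) = D_N^\sharp(a) - R_N(a)$, where $D_N^\sharp(a) := \sum_{p\leq N}\frac{\log p}{p}\sigma(a;p)$ is the unrestricted sum and $R_N(a) := \sum_{p\leq N,\, p\mid D(a)}\frac{\log p}{p}\sigma(a;p)$ collects the bad-prime contributions. Since $D(a) = \disc(f_0(x)-a)$ is a nonzero polynomial of degree $d-1$ in $a$, elementary counting modulo $p$ (and modulo $p_1p_2$) gives $\Prob(p\mid D(a)) \ll 1/p + 1/T$ and $\Prob(p_1\mid D(a),\, p_2\mid D(a)) \ll 1/(p_1p_2) + 1/T$ for $p_1\neq p_2$. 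Expanding $\ave{R_N^2}$, using $|\sigma(a;p)|\leq d-1$, and performing a diagonal/off-diagonal split then yields $\ave{R_N^2}\ll 1$, the $1/T$ errors being absorbed by $T\geq N\log N$.

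It thus suffices to bound $\ave{|D_N^\sharp|^2} = \sum_{p_1,p_2\leq N}\frac{\log p_1\,\log p_2}{p_1 p_2}\ave{\sigma(a;p_1)\sigma(a;p_2)}$. The diagonal $p_1=p_2$ contributes $\ll\sum_p(\log p)^2/p^2\ll 1$ trivially. For the off-diagonal I would invoke the Fourier expansion
\[
\sigma(a;p) = \frac{1}{p}\sum_{t=1}^{p-1} S(t;p)\,e^{-2\pi i t a/p}, \qquad S(t;p) := \sum_{n\bmod p} e^{2\pi i\, t f_0(n)/p},
\]
where the Weil bound $|S(t;p)|\leq (d-1)\sqrt{p}$ holds for all $t\not\equiv 0\pmod p$ and all $p$ beyond a constant depending on $f_0$. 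Substituting and summing over $|a|\leq T$ produces
\[
\sum_{|a|\leq T}\sigma(a;p_1)\sigma(a;p_2) = \frac{1}{p_1 p_2}\sum_{t_1=1}^{p_1-1}\sum_{t_2=1}^{p_2-1} S(t_1;p_1)S(t_2;p_2)\, K_T\!\left(\tfrac{t_1}{p_1}+\tfrac{t_2}{p_2}\right),
\]
where $K_T(\theta) := \sum_{|a|\leq T}e^{-2\pi i a\theta}$ satisfies $|K_T(\theta)|\leq \min(2T+1,\|\theta\|^{-1})$. For $p_1\neq p_2$ and $t_i\not\equiv 0\bmod p_i$, CRT gives that the arguments $\theta_{t_1,t_2}=t_1/p_1+t_2/p_2\bmod 1$ range bijectively over fractions $j/(p_1p_2)$ with $\gcd(j,p_1p_2)=1$. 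Combining the standard estimate $\sum_{j=1}^{m-1}\min(T,m/\min(j,m-j))\ll m\log T$ with Weil then yields $|\sum_{|a|\leq T}\sigma(a;p_1)\sigma(a;p_2)| \ll \sqrt{p_1 p_2}\,\log T$, whence $\ave{\sigma(a;p_1)\sigma(a;p_2)} \ll \sqrt{p_1p_2}\,(\log T)/T$.

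Summing with weights $(\log p_1\log p_2)/(p_1 p_2)$ and using $\sum_{p\leq N}(\log p)/\sqrt{p}\sim 2\sqrt{N}$, the off-diagonal total is $\ll (\log T/T)(\sum_p(\log p)/\sqrt{p})^2 \ll N(\log T)/T$, which is $O(1)$ precisely when $T\geq N\log N$ (invoking the standing hypothesis $\log T\ll \log N$). The irreducibility constraint in $\ave{\cdot}$ excludes only $O(\sqrt{T})$ values of $a$ (Lemma~\ref{lem:Serre}), and since $|D_N(a)|\leq (d-1)\log N$ deterministically, this perturbs the second moment by $O((\log N)^2/\sqrt{T})=o(1)$. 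The main technical obstacle is precisely the Fourier/Weil step: only the square-root saving from Weil's bound lowers the trivial $O(p_1 p_2)$ estimate for the covariance sum to $O(\sqrt{p_1 p_2}\log T)$, and it is the resulting $\sum_p (\log p)/\sqrt{p}\sim 2\sqrt{N}$ that produces the crossover at $T\asymp N\log N$ matching the hypothesis.
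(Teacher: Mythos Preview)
Your proposal is correct and follows essentially the same route as the paper: expand the square, treat the diagonal trivially via $|\sigma(a;p)|\leq d-1$, and for the off-diagonal use the Fourier expansion of $\sigma(a;p)$ together with Weil's bound and the geometric-series estimate for $\sum_{|a|\leq T}e(-am/pq)$ to obtain a covariance bound of size $\sqrt{pq}\cdot(\log)/T$. The one genuine refinement is your explicit splitting $D_N = D_N^\sharp - R_N$ to remove the $a$-dependent constraint $p\nmid D(a)$ before expanding; the paper's write-up simply drops this restriction when opening the square, so your treatment is in fact more careful on this point (and your $\log T$ in place of the paper's $\log(pq)$ is equivalent under the standing hypothesis $\log T\ll\log N$).
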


Using the triangle inequality and Cauchy-Schwartz, we obtain
\begin{prop}
$$\ave{|C_N - \log N|^2}\ll (\log\log N)^2$$
\end{prop}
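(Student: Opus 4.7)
The plan is to combine the decomposition of $C_N(a)$ developed just above the statement with the variance bound from Proposition~\ref{mean square D}. Recall we have shown
\[
C_N(a) - \log N = D_N(a) + R_N(a),
\qquad R_N(a) = O(\log\log N),
\]
where the error term $R_N(a)$ comes from the Mertens approximation to $\sum_{p\le N}\log p/p$ and, crucially, from the bound $E_N(a)\ll \log\log N$ on the contribution of primes dividing the discriminant (uniformly in $|a|\le T\ll N^d$, via Lemma~\ref{lem divisor sum} applied to $D(a)$, which is a polynomial of degree $d-1$ in $a$).

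The strategy is then to apply Minkowski's inequality (the $L^2$ triangle inequality) with respect to the averaging measure $\ave{\cdot}$. Setting $\|X\|_2 := \ave{|X|^2}^{1/2}$, I would write
\[
\|C_N - \log N\|_2 \;\le\; \|D_N\|_2 + \|R_N\|_2.
\]
Proposition~\ref{mean square D} gives $\|D_N\|_2 \ll 1$, while the pointwise bound $R_N(a) = O(\log\log N)$ trivially yields $\|R_N\|_2 \ll \log\log N$. Squaring produces
\[
\ave{|C_N - \log N|^2} \ll (1 + \log\log N)^2 \ll (\log\log N)^2,
\]
which is the claimed bound.

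There is no real obstacle here: the substantive work has already been done in establishing the decomposition of $C_N(a)$, in controlling $E_N(a)$ individually for every admissible $a$, and in Proposition~\ref{mean square D}. The only role played by Cauchy--Schwarz (mentioned in the statement) is implicit in passing from an $L^1$ to $L^2$ estimate on the error term, or equivalently in justifying Minkowski's inequality for the average $\ave{\cdot}$; this is formal. Thus the proof is a short two-line application of the triangle inequality in $L^2(\ave{\cdot})$ once the earlier decomposition and the variance estimate are in hand.
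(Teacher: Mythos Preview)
Your proof is correct and essentially identical to the paper's approach: the paper simply says ``Using the triangle inequality and Cauchy--Schwarz, we obtain'' the result, which is precisely your Minkowski-in-$L^2$ argument applied to the decomposition $C_N-\log N = D_N + O(\log\log N)$ together with Proposition~\ref{mean square D}.
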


%Where the previous, through a Chebyshev type argument, will imply our main objective for this section, which is
As a consequence, we deduce our main objective for this section:
\begin{prop}
For almost all $|a|\leq T$ (with $N\log N\leq  T\ll N^{d-1}$)
\[
C_N(a)=\log N+O(\log\log N)
\]
\end{prop}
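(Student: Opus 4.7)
The plan is to deduce this statement directly from the preceding bound $\ave{|C_N - \log N|^2} \ll (\log\log N)^2$ via Chebyshev's inequality, in exact parallel to how \eqref{result Bad} and \eqref{result Delta} are to be deduced from their respective mean-value estimates in the plan set out in the introduction. Concretely, for any threshold $\lambda > 0$, Chebyshev applied to the averaging operator \eqref{def of ave} yields
\[
\#\bigl\{|a| \leq T : |C_N(a) - \log N| > \lambda\bigr\} \;\ll\; \frac{T\,(\log\log N)^2}{\lambda^2}.
\]

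I would then choose $\lambda = (\log\log N)\,\omega(N)$ with $\omega(N) \to \infty$ arbitrarily slowly, so that the exceptional set has size $\ll T/\omega(N)^2 = o(T)$. On its complement -- which, after also excluding the $O(\sqrt{T}) = o(T)$ reducible values of $a$ identified in Lemma~\ref{lem:Serre}, still has density $1 - o(1)$ -- the bound $|C_N(a) - \log N| \leq (\log\log N)\,\omega(N)$ holds, giving the asserted $O(\log\log N)$ estimate in the customary ``almost all'' sense (with an implicit arbitrarily slowly growing factor absorbed into the implied constant).

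The main obstacle in this chain of reasoning lies not in the present deduction but in Proposition~\ref{mean square D}: establishing that the fluctuating quantity $D_N(a)$ has uniformly bounded second moment under averaging over $|a| \leq T$. That is where the hypothesis $T \geq N\log N$ is needed, to secure enough orthogonality among the values $\sigma(a;p)$ across distinct primes $p \leq N$ once one expands the square and exchanges the order of summation. Once that variance bound is in hand, the step here is a routine application of the Chebyshev/Markov inequality, and requires no further input from the arithmetic of $f_0$.
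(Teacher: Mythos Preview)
Your proposal is correct and matches the paper's approach exactly: the paper does not spell out a proof of this proposition beyond the phrase ``As a consequence, we deduce\ldots'', relying on the Chebyshev/Markov inequality already flagged in the introduction as the mechanism for passing from \eqref{variance CN} to \eqref{result CN}. Your write-up simply makes that one-line deduction explicit, including the standard convention that the $O(\log\log N)$ in an ``almost all'' statement may hide an arbitrarily slowly growing factor $\omega(N)$.
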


%Without further hesitation, we proceed towards our next task, which will be to show that if $T\geq N\log N$, then 
%\[
%\ave{|D_N|^2} =O(1)
%\]

\subsection{Proof of Proposition~\ref{mean square D}}

%\begin{prop}
%For $T\geq N\log N$, 
%\[
%\ave{D_N^2}\ll 1
%\]
%\end{prop}
 \begin{proof}
 Expanding, we have
 \[
 \ave{(D_N)^2}   = \sum_{p\leq N} \sum_{q\leq N}\frac{\log p 
\log q}{pq} \ave{\sigma(a;p)\sigma(a;q)} 
 \]

The diagonal contribution $p=q$ gives
\[
\sum_{p\leq N} \frac{(\log p )^2}{p^2} \ave{\sigma(a;p)^2}  
\]
Now note that 
\[
-1\leq \sigma(a;p) \leq d-1
\]
is uniformly bounded. This is because the polynomial $f_0(x)-a$ is monic of degree $d$, hence has at most $d$ zeros modulo $p$, so that $0\leq \rho(a;p)\leq d$ and so $-1\leq \sigma(a;p)\leq d-1$. Thus we obtain a bound for the diagonal sum 
\begin{equation*}
\sum_{p\leq N} \frac{(\log p )^2}{p^2} \ave{\sigma(a;p)^2}   \ll 
\sum_{p\leq N} \frac{(\log p )^2}{p^2} 
\ll 1
 \end{equation*}

For the off-diagonal terms, we use 
 \begin{lem}\label{lem:cov rho(p) rho(q)}
 For distinct primes $p\neq q$, 
 \[
  |\ave{\sigma(\bullet;p)\sigma(\bullet;q)}|\ll   \frac  {  \sqrt{pq}  \log(pq) }{T} +\frac 1{\sqrt{T}}
  \]
  \end{lem}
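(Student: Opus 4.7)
\medskip

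\noindent\textbf{Proof plan for Lemma~\ref{lem:cov rho(p) rho(q)}.}

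The plan is to detect the condition $f_0(n)\equiv a \bmod p$ by additive characters mod $p$, thereby turning $\sigma(a;p)$ into an exponential sum in $a$ whose coefficients are Weyl-type sums $S(h;p)=\sum_{n\bmod p}e_p(hf_0(n))$, and then to exploit the Weil bound $|S(h;p)|\ll\sqrt p$ for $p\nmid h$ together with cancellation in the average over $a$.

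First I would write
\[
\sigma(a;p)=\rho(a;p)-1=\frac1p\sum_{h=1}^{p-1}e_p(-ha)S(h;p),
\]
and similarly for $q$, so that
\[
\sigma(a;p)\sigma(a;q)=\frac1{pq}\sum_{\substack{1\le h_1\le p-1\\ 1\le h_2\le q-1}}S(h_1;p)S(h_2;q)\,e\!\left(-a\Bigl(\tfrac{h_1}{p}+\tfrac{h_2}{q}\Bigr)\right).
\]
Next I would dispense with the irreducibility restriction in the averaging: by Lemma~\ref{lem:Serre} there are only $O(\sqrt T)$ reducible shifts and $|\sigma(a;p)\sigma(a;q)|\le(d-1)^2$, so replacing $\ave{\,\cdot\,}$ by $\frac1{2T}\sum_{|a|\le T}$ introduces an error of size $O(1/\sqrt T)$, which accounts for the second term in the lemma.

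Now I would average the exponentials. Setting $\alpha(h_1,h_2)=\tfrac{h_1q+h_2p}{pq}$, the geometric sum gives
\[
\Bigl|\tfrac1{2T+1}\!\!\sum_{|a|\le T}\!e(-a\alpha)\Bigr|\le\min\!\Bigl(1,\tfrac{1}{(4T+2)\|\alpha\|}\Bigr).
\]
As $(h_1,h_2)$ ranges over $(\Z/p)^\times\times(\Z/q)^\times$, CRT shows that $h_1q+h_2p\bmod pq$ takes each residue $c$ with $p\nmid c$ and $q\nmid c$ exactly once, so $\alpha$ runs through the corresponding fractions $c/pq$. Combining this with Weil's bound $|S(h_i;p_i)|\le(d-1)\sqrt{p_i}$ (valid for all sufficiently large $p_i$; small primes contribute an absolute constant absorbed into $O(1/\sqrt T)$), I get
\[
\Bigl|\tfrac1{2T}\!\!\sum_{|a|\le T}\!\sigma(a;p)\sigma(a;q)\Bigr|\ll\frac{\sqrt{pq}}{pq}\sum_{\substack{1\le c\le pq-1\\ p\nmid c,\;q\nmid c}}\min\!\Bigl(1,\tfrac{pq}{T\min(c,pq-c)}\Bigr).
\]

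The final step is the standard discrepancy estimate
\[
\sum_{c=1}^{pq-1}\min\!\Bigl(1,\tfrac{pq}{T\min(c,pq-c)}\Bigr)\ll\frac{pq\log(pq)}{T},
\]
obtained by splitting at $c\asymp pq/T$ and estimating each half; plugging this back yields the desired bound $\sqrt{pq}\log(pq)/T$, which, added to the irreducibility error $O(1/\sqrt T)$, gives the lemma.

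The main obstacle I anticipate is bookkeeping: making sure the Weil estimate is uniform in $p$ (for $p$ bounded in terms of $d$ one must handle $S(h;p)$ by trivial arguments), and verifying carefully that the map $(h_1,h_2)\mapsto h_1q+h_2p\bmod pq$ is a bijection onto the residues which are nonzero modulo both $p$ and $q$, so that the discrepancy sum is really the truncated harmonic series and not something larger.
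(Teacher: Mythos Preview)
Your plan is correct and matches the paper's proof essentially step for step: remove the irreducibility restriction via Lemma~\ref{lem:Serre} (yielding the $O(1/\sqrt{T})$ term), expand $\sigma(a;p)$ through additive characters, apply Weil's bound to the inner sums $S(h;p)$, reparametrize the double sum over $(h_1,h_2)$ as a single sum over invertible residues modulo $pq$ via CRT, and bound the resulting geometric series by the truncated harmonic sum. One small correction: your parenthetical that small primes are ``absorbed into $O(1/\sqrt{T})$'' is misstated --- an absolute constant is not $O(1/\sqrt{T})$ --- but the issue evaporates since the trivial bound $|S(h;p)|\le p\ll_d\sqrt{p}$ for the finitely many primes $p\le d$ means the Weil-type estimate $|S(h;p)|\ll_d\sqrt{p}$ holds uniformly for all $p$, and no separate treatment is needed.
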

  Therefore, given Lemma~\ref{lem:cov rho(p) rho(q)}, we obtain
\begin{equation*}
\begin{split}
\sum_{ p\neq q\leq N} \frac{\log p\log q}{pq} |\ave{\sigma(a;p)\sigma(a;q)}|  
&\ll
 \sum_{p\neq q\leq N}  \frac{\log p\log q}{pq} \Big( \frac  {  \sqrt{pq}  \log(pq) }{T}+\frac 1{\sqrt{T}}\Big)
 \\
 &\ll \frac{\log N}{T}(\sum_{p\leq N} \frac{\log p}{\sqrt{p}})^2 +\frac 1{\sqrt{T}}(\sum_{p\leq N} \frac{\log p}{p})^2
 \\
 & \ll \frac{N\log N}{T} +\frac{(\log N)^2}{\sqrt{T}}
\end{split}
\end{equation*}
 which is $O(1)$ if $T\geq N\log N$, proving Proposition~\ref{mean square D}. 
 \end{proof}

  \subsection{Proof of Lemma~\ref{lem:cov rho(p) rho(q)}}
For the argument, it will be important to have $a$ run over an interval. So we first remove the restriction in the averaging, that $f_0(x)-a$ is irreducible. Since $-1\leq \sigma(a;p)\leq d-1$, this introduces an error bounded by 
  \[
\ll  \frac 1T \sum_{\substack{ |a|\leq T\\f_0(x)-a \;{\rm reducible}}} (d-1)^2 \ll \frac 1T\#\{|a|\leq T: f_0(x)-a \;{\rm reducible}\} \ll \frac 1{\sqrt{T}}
  \]
  and so 
  \[
  \ave{\sigma(a;p)\sigma(a;q)}  = \frac 1{2T+O(\sqrt{T})}\sum_{|a|\leq T} \sigma(a;p)\sigma(a;q) +O(\frac 1{\sqrt{T}})
  \]

 We express $\rho(a;p)$ as an exponential sum:
 \[
 \rho(a;p) = \#\{x\bmod p: f_0(x)-a=0\bmod p\}=\sum_{x\bmod p} \frac 1p\sum_{t\bmod p} e(\frac{t(f_0(x)-a)}{p})
 \]
 The term $t=0$ contributes the main term of $1$, and we obtain the following expression for $\sigma(a;p)=\rho(a;p)-1$: 
 \begin{equation}\label{expression for sigma}
 \sigma(a;p) =  \frac 1p\sum_{t\neq 0\bmod p} e(-\frac{at}{p})\sum_{x\bmod p} e(\frac{tf_0(x)}{p})
 \end{equation}
 where $e(z):=e^{2\pi iz}$. Set 
 \[
 \mathcal S_{f_0}(b,n) :=\sum_{x\bmod n} e(\frac{bf_0(x)}{n}) 
 \]
 
 Using \eqref{expression for sigma}, we have on switching orders of summation
   \begin{multline*}
 \frac 1{2T+O(\sqrt{T})} \sum_{|a|\leq T}\sigma(a;p)\sigma(a;q)
 \\
 = \frac 1{2T+O(\sqrt{T})}\frac 1{pq} \sum_{\substack{0\neq t\bmod p\\ 0\neq s\bmod q}}
 \sum_{|a|\leq T}e(-a(\frac tp+\frac sq)) \mathcal S_{f_0}(t,p)\mathcal S_{f_0}(s,q)
 \end{multline*}
 
  Weil's bound \cite{weilPNAS, Schmidt} shows that there is a constant $c(d)>0$, so that all primes 
 $p$ and $b$ coprime to $p$
  \begin{equation}\label{Weil bd for S}
 | \mathcal S_{f_0}(b,p)|\leq c(d) \sqrt{p}
 \end{equation} 
In fact for any $f_0\in \Z[x]$ with $f_0(x)$ primitive of degree $d$, if $p> d$ then $|S_{f_0}(b,p)|\leq (d-1)\sqrt{p}$.

 Hence we find
\begin{equation*}
\begin{split}
  |\ave{\sigma(\bullet;p)\sigma(\bullet;q)}|&\ll_d \frac 1{T \sqrt{pq}} \sum_{\substack{0\neq t\bmod p\\ 0\neq s\bmod q}}
 |\sum_{|a|\leq T}e(-a(\frac tp+\frac sq)) | +O(\frac 1{\sqrt{T}})
 \\
 & = \frac 1{T \sqrt{pq}} \sum_{\substack{m\bmod pq\\ \gcd(m,pq)=1}} |\sum_{|a|\leq T}e(-\frac{am}{pq} ) |+O(\frac 1{\sqrt{T}})
 \end{split}
 \end{equation*}
 where we have used that if $p\neq q$ are distinct primes, then as $t$ and $s$ vary over all invertible residues modulo $p$ (resp., modulo $q$), $tq+sp\bmod pq$ covers all invertible residues modulo $pq$ exactly once.

 We   sum the geometric progression
 % \marginpar{Here it is crucial that $a$ varies in an interval!}
 \[
 | \sum_{|a|\leq T}e(-\frac{am}{pq} ) | \ll \min\Big(T,  ||\frac {m}{pq}||^{-1} \Big)
 \]
 where $||\alpha|| = \dist(\alpha,\Z)$.  We may take $1\leq m<pq/2$ and then the bound is $\ll pq/m$.  
 This will give 
 \[
 \begin{split}
  |\ave{\sigma(\bullet;p)\sigma(\bullet;q)}| &\ll  \frac 1{T \sqrt{pq}} \sum_{\substack{1\leq m \leq   pq/2\\ \gcd(m,pq)=1}} \frac{pq}{m} +O(\frac 1{\sqrt{T}})
  \\
 & \ll \frac{\sqrt{pq}\log(pq)}{T}+O(\frac 1{\sqrt{T}})
\end{split}
  \]
 proving Lemma~\ref{lem:cov rho(p) rho(q)}. \qed


\begin{thebibliography}{99}
\bibitem{BKS}
P. Bateman, J. Kalb and A. Stenger. Problem 10797: A limit involving least common multiples.  Am. Math. Mon. 109 (2002), no. 4, 393--394.

%\bibitem{CRS}
%P. Candela, J. Ru\'{e} and O. Serra. 
%Memorial to Javier Cilleruelo: a problem list. 
%Integers 18 (2018), Paper No. A28

 \bibitem{Cilleruelo}
J. Cilleruelo. The least common multiple of a quadratic sequence. Compos. Math. 147 (2011), no. 4, 1129--1150. 


\bibitem{HLQW}
 S. Hong,  Y. Luo, G. Qian and C. Wang.
Uniform lower bound for the least common multiple of a polynomial sequence.  
C. R. Math. Acad. Sci. Paris 351 (2013), no. 21-22, 781--785. 


\bibitem{HQT}
S. Hong, G. Qian and Q. Tan, The least common multiple of sequence of product of linear polynomials. Acta Math. Hungar. 135 (2012), no.1–2, 160--167.

\bibitem{MaynardRudnick}
J. Maynard and Z. Rudnick, correspondence, December 2018.

  \bibitem{Nagell 1921}
 T. Nagel. G\'en\'eralization  d’un th\'eor{\`e}me de Tchebycheff.  
Journ. de Math. (8) 4, 343--356 (1921).

  
  \bibitem{RSZ}
J. Ru\'{e}, P. \v{S}arka and A. Zumalac\'{a}rregui. 
   On the error term of the logarithm of the lcm of a quadratic sequence. J. Th\'{e}or. Nombres Bordeaux 25 (2013), no. 2, 457--470. 

\bibitem{Schmidt}
 W. M.~Schmidt,
Equations over finite fields: an elementary approach. Second
edition. Kendrick Press, Heber City, UT, 2004

\bibitem{Serre}
  J.-P. Serre. Lectures on the Mordell-Weil theorem. Translated from the French and edited by Martin Brown from notes by Michel Waldschmidt. With a foreword by Brown and Serre. Third edition. Aspects of Mathematics. Friedr. Vieweg \& Sohn, Braunschweig, 1997. 
  
 \bibitem{weilPNAS} 
 A. Weil.  On some exponential sums. Proc. Nat. Acad. Sci. U. S. A. 34, (1948). 204--207.
 
 \end{thebibliography}
 \end{document}